\newtheorem{theorem}{Theorem}
\newtheorem{lemma}[theorem]{Lemma}
\newtheorem{remark}[theorem]{Remark}
\tikzset{>={Latex[width=0.8mm,length=0.8mm]}}
\title{Human-verifiable proofs in the theory of word-representable graphs}
\author{Sergey Kitaev\footnote{Department of Mathematics and Statistics, University of Strathclyde, 26 Richmond Street, Glasgow G1, 1XH, United Kingdom. 
{\bf Email:} sergey.kitaev@strath.ac.uk.}\ \ and Haoran Sun\footnote{College of Science, Donghua University, Shanghai, 201620, P.R. China. {\bf Email:} sunhaoran2012@outlook.com}}
\begin{document}

\maketitle

\begin{abstract}
A graph is word-representable if it can be represented in a certain way using alternation of letters in words. Word-representable graphs generalise several important and well-studied classes of graphs, and they can be characterised by semi-transitive orientations. Recognising word-representability is an NP-complete problem, and the bottleneck of the theory of word-representable graphs is convincing someone that a graph is non-word-representable, keeping in mind that references to (even publicly available and user-friendly) software are not always welcome. (Word-representability can be justified by providing a semi-transitive orientation as a certificate that can be checked in polynomial time.) 

In the literature, a variety of (usually ad hoc) proofs of non-word-representability for particular graphs, or families of graphs, appear, but for a randomly selected graph, one should expect looking at $O(2^{\#\mbox{edges}})$ orientations and justifying that none of them is semi-transitive. Even if computer would print out all these orientations and would point out what is wrong with each of the orientations, such a proof would be essentially non-checkable by a human. 

In this paper, we develop methods for an automatic search of human-verifiable proofs of graph non-word-representability. As a proof-of-concept, we provide ``short'' proofs of non-word-representability, generated automatically by our publicly available user-friendly software, of the Shrikhande graph on 16 vertices and 48 edges (6 ``lines'' of proof) and the Clebsch graph on 16 vertices and 40 edges (10 ``lines'' of proof). Producing such short proofs for relatively large graphs would not be possible without the instrumental tool we introduce (allowing to assume orientations of several edges in a graph, not just one edge as it was previously used) that is a game changer in the area.  As a bi-product of our studies, we correct two mistakes published multiple times (two graphs out of the 25 non-word-representable graphs on 7 vertices were actually word-representable, while two non-word-representable graphs on 7 vertices were missing).  \\

\noindent
{\bf Keywords:}  word-representable graph, semi-transitive orientation, automated proof, non-word-representability, Clebsch graph, Shrikhande graph \\

\noindent {\bf 2010 Mathematics Subject Classification:} 05C62
\end{abstract}

\section{Introduction}

There is a long line of research papers in the literature dedicated to the theory of word-representable graphs (e.g.\ see \cite{K17,KL15} and references therein). The motivation to study these graphs is their relevance to algebra, graph theory, computer science, combinatorics on words, and scheduling \cite{KL15}. In particular, word-representable graphs generalize several fundamental classes of graphs (e.g.\ {\em circle graphs}, {\em $3$-colorable graphs} and {\em comparability graphs}). 

Two letters $x$ and $y$ alternate in a word $w$ if after deleting in $w$ all letters but the copies of $x$ and $y$ we either obtain a word $xyxy\cdots$ (of even or odd length) or a word $yxyx\cdots$ (of even or odd length).  A graph $G=(V,E)$ is {\em word-representable} if and only if there exists a word $w$
over the alphabet $V$ such that letters $x$ and $y$, $x\neq y$, alternate in $w$ if and only if $xy\in E$. We say that $w$ {\em represents} $G$. A graph is word-representable if and only if each of its connected components is word-representable \cite{KL15}. Hence, WLOG we can assume that graphs under our consideration are always connected.

The minimum (by the number of vertices) non-word-representable graph is on 6 vertices, and the only such graph is the wheel graph $W_5$ (that is obtained from Graph 1 in Figure~\ref{25-non-word-repr} after removing vertex 7), while there are 25 non-word-representable graphs on 7 vertices \cite{KL15}. A word is {\em uniform} if each letter occurs in it the same number of times. We will need the following two results.

\begin{theorem}[\cite{KP08}]\label{aux-1} If a graph $G$ is word-representable then there exists a uniform word representing it. \end{theorem}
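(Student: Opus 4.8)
The plan is to start from an arbitrary word $w$ over $V$ that represents $G$ and to transform it, without changing the set of alternating pairs, into a word in which every letter occurs the same number of times. Write $|w|_a$ for the number of occurrences of a letter $a$ in $w$ and let $k = \max_{a\in V}|w|_a$. The target is a word in which every letter occurs exactly $k$ times; equivalently, I want to drive the total deficiency $\sum_{a\in V}(k-|w|_a)$ down to $0$ while preserving the property that $x$ and $y$ alternate if and only if $xy\in E$.

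The foundational tool is the \emph{initial permutation} of a word. For a word $u$ on the alphabet $V$, let $P(u)$ be the permutation of $V$ that lists the letters in the order of their first occurrence in $u$. First I would prove a buffer lemma: $P(u)\,u$ represents the same graph as $u$. The proof is a short case analysis on a pair $x,y$: assuming $x$ occurs first, the restriction of $u$ to $\{x,y\}$ begins with $x$ and $P(u)$ contributes the block $xy$; prepending $xy$ to a word that begins with $x$ preserves the status of the pair, since an alternating $xyxy\cdots$ stays alternating and any internal defect of a non-alternating restriction survives. Consequently $P(u)\,u$ represents $G$ and raises every multiplicity by exactly $1$; iterating shows $P(u)^{t}\,u$ represents $G$ for all $t\ge 0$.

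Since prefixing full permutations raises all multiplicities uniformly, it cannot by itself remove deficiency; the real work is an equalizing step that changes the multiplicities of some letters but not others. Here I would argue by induction on the total deficiency: if it is positive, pick a letter $a$ with $|w|_a<k$ and insert one additional copy of $a$ at a carefully chosen position, so that for every neighbor $y$ of $a$ the alternation of $\{a,y\}$ is preserved and no non-edge $\{a,z\}$ becomes alternating. The buffer lemma is used to normalise $w$ beforehand, so that $w$ begins with a permutation of $V$ and every pair already has a well-defined ``leader''; this is what creates enough room to place the new copy of $a$ correctly relative to the first and last occurrences of each of its neighbors.

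The main obstacle is precisely this insertion step. Naive choices fail: doubling the first occurrence of $a$ destroys alternation with every neighbor that $a$ precedes, doubling the last occurrence destroys alternation with every neighbor that $a$ follows, and appending or prepending a bare $a$ breaks edges in the same way. The crux is therefore to show that, after normalising with the initial permutation, there is always a legal position for the extra copy of $a$ — one that simultaneously respects all of $a$'s neighbors and non-neighbors — so that the induction proceeds and terminates with a $k$-uniform word representing $G$.
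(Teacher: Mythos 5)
Your proposal does not get to a proof, and the hole is exactly the step you yourself flag as ``the crux'': you never exhibit a legal position for the extra copy of the deficient letter $a$, and without that the induction on total deficiency never moves. (The paper does not prove Theorem~\ref{aux-1} at all --- it cites it from \cite{KP08} --- so the comparison here is with the proof in that reference.) Your buffer lemma is correct and is indeed a preliminary fact from \cite{KP08}, but, as you observe, prepending the full initial permutation raises every multiplicity by one and cannot reduce deficiency; so the entire content of the theorem sits in the insertion step you left open.

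The missing ingredient is a one-line counting observation that dissolves the difficulty: if $x$ and $y$ alternate in $w$ and the first occurrence of $x$ precedes the first occurrence of $y$, then $w$ restricted to $\{x,y\}$ reads $xyxy\cdots$, so $x$ occurs in $w$ at least as many times as $y$. Consequently a deficient letter can never ``lead'' an alternating pair with a letter of maximum multiplicity $k$. Now let $A$ be the set of deficient letters and let $p$ be the initial permutation of $w$ restricted to $A$; then $pw$ represents $G$. Indeed, for an edge with both endpoints in $A$, the two prepended letters appear in $p$ in leader order, so the restriction becomes $xy\cdot xyxy\cdots$, still alternating; for an edge $ay$ with $a\in A$ and $y$ of multiplicity $k$, the observation forces $w|_{\{a,y\}}=yaya\cdots$, so prepending $a$ gives $ayay\cdots$, again alternating; and for any non-edge, a defect (two equal consecutive letters in the restriction) survives prepending. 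In $pw$ every deficient letter gains exactly one copy and every full letter gains none, so the maximum multiplicity is still $k$ and the total deficiency drops by $|A|\ge 1$; iterating yields a $k$-uniform word. Note the irony: ``prepending a bare $a$,'' one of the moves you dismissed as hopeless, is essentially the correct move. It is safe with respect to every full neighbour precisely because of the counting fact (a deficient $a$ cannot lead such a pair), and the residual problem with deficient neighbours of $a$ is eliminated either by prepending all of $A$ at once in initial-permutation order, or one letter at a time, always choosing the deficient letter whose first occurrence is rightmost among the deficient ones.
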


\begin{theorem}[\cite{KP08}]\label{aux-2} Let $w=w_1w_2$ be a uniform word representing a graph $G$, where $w_1$ and $w_2$ are words. Then the word $w_2w_1$ also represents $G$. \end{theorem}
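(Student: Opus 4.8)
The plan is to reduce the claim to the two-letter restrictions of $w$ and then to exploit uniformity through a cyclic-rotation argument. Recall that whether an edge $xy$ belongs to $G$ is decided solely by the subword of the representing word obtained by deleting every letter other than $x$ and $y$; call this the $\{x,y\}$-\emph{projection}. Thus it suffices to show that for every pair of distinct letters $x,y$, the $\{x,y\}$-projection of $w_2w_1$ is alternating if and only if that of $w=w_1w_2$ is.

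First I would fix the common multiplicity. Since $w$ is uniform, every letter, and in particular both $x$ and $y$, occurs the same number of times, say $k$. Writing $u$ and $v$ for the $\{x,y\}$-projections of $w_1$ and $w_2$ respectively, the $\{x,y\}$-projection of $w$ is the concatenation $uv$, while that of $w_2w_1$ is $vu$. Crucially, $uv$ contains exactly $k$ copies of $x$ and exactly $k$ copies of $y$, so it is a word of even length $2k$ on $\{x,y\}$ with perfectly balanced letter counts.

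The heart of the argument is the observation that, because of this balance, an alternating $\{x,y\}$-projection can only be $(xy)^k$ or $(yx)^k$: a length-$2k$ word on $\{x,y\}$ that alternates and uses each letter $k$ times leaves no other possibility. Both of these words are \emph{cyclically} alternating, meaning consecutive letters differ even across the wrap-around from the last position back to the first. I would then note that $vu$ is precisely the cyclic rotation of $uv$ by $|u|$ positions, and that rotating a cyclically alternating word of even length yields a (linearly) alternating word, since the adjacent pairs of the rotated word are exactly the cyclic adjacent pairs of the original. Hence if $uv$ alternates then so does $vu$; applying the same reasoning to the rotation carrying $vu$ back to $uv$ gives the converse.

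Putting these together, for every pair $x,y$ the $\{x,y\}$-projection of $w_2w_1$ alternates exactly when that of $w$ does, so $w_2w_1$ induces the same edge set and therefore represents $G$. I expect the only genuinely delicate point to be the honest use of the uniformity hypothesis: it is precisely the equal-count condition that upgrades ``alternating'' to ``cyclically alternating'' and thereby makes alternation invariant under rotation. Without uniformity the statement fails (for instance $xyx$ alternates but its rotation $xxy$ does not), so I would take care to invoke balance exactly where it is needed, namely in pinning each projection down to $(xy)^k$ or $(yx)^k$.
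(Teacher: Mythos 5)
Your proof is correct: reducing to the $\{x,y\}$-projections, using uniformity to pin an alternating balanced projection down to $(xy)^k$ or $(yx)^k$, and then invoking rotation-invariance of cyclic alternation (in both directions, since $vu$ is just as balanced as $uv$) is a complete and honest argument, and you correctly identify where uniformity is indispensable. Note that the paper itself offers no proof of this statement---it is quoted with a citation to \cite{KP08}---and your argument is essentially the standard one underlying that cited result, so there is no divergence of approach to report.
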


An orientation of a graph is {\em semi-transitive} if it is acyclic, and for any directed path $v_0\rightarrow v_1\rightarrow \cdots \rightarrow v_k$ either there is no edge between $v_0$ and $v_k$, or $v_i\rightarrow v_j$ is an edge for all $0\leq i<j\leq k$. 
An induced subgraph on vertices $\{v_0,v_1,\ldots,v_k\}$ of an oriented graph is a {\em shortcut} if its orientation is acyclic (contains no directed cycles) and non-transitive, and there is the directed path $v_0\rightarrow v_1\rightarrow \cdots \rightarrow v_k$  and the edge $v_0\rightarrow v_k$ called the {\em shortcutting edge}. A semi-transitive orientation can then be alternatively defined as an acyclic shortcut-free orientation. A fundamental result in the area is the following theorem.

\begin{theorem}[\cite{HKP16}]\label{fundamental} A graph is word-representable if and only if it admits a semi-transitive orientation. \end{theorem}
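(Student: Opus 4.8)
The plan is to prove the two implications separately, using the uniform-word machinery of Theorems~\ref{aux-1} and~\ref{aux-2} for one direction and an inductive word-construction for the other.

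For the forward direction (word-representable $\Rightarrow$ semi-transitive orientation), I would start from any word representing $G$ and, by Theorem~\ref{aux-1}, replace it with a uniform word $w$ in which every letter occurs exactly $k$ times. I would then orient each edge $xy\in E$ by reading off its alternation pattern: since $x$ and $y$ alternate and $w$ is uniform, the subword of $w$ on $\{x,y\}$ is either $xyxy\cdots$ or $yxyx\cdots$, and I orient $x\to y$ precisely when $x$ precedes $y$ in this pattern (equivalently, when the first occurrence of $x$ precedes that of $y$). Acyclicity is then immediate: a directed cycle would force a cyclic chain of strict inequalities among first-occurrence positions, which is impossible since distinct letters have distinct first occurrences. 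The substance of this direction is showing the orientation is shortcut-free. I would argue by contradiction, taking a directed path $v_0\to v_1\to\cdots\to v_k$ together with a shortcutting edge $v_0\to v_k$ and some non-edge $v_iv_j$ with $0\le i<j\le k$, and then extracting from the alternation conditions along the path a forced alternation (hence an edge) between $v_i$ and $v_j$, contradicting the non-edge. This requires a careful analysis of how alternation of consecutive pairs propagates inside a uniform word.

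For the reverse direction (semi-transitive orientation $\Rightarrow$ word-representable), I would proceed by induction on $|V|$, building a uniform word directly from the orientation. Given a semi-transitive orientation of $G$, I would select a suitable vertex $v$ (for instance, a source with respect to a fixed topological order), apply the induction hypothesis to obtain a uniform word $w'$ representing $G-v$, and then insert the copies of $v$ into $w'$ so that $v$ alternates with exactly its neighbours $N(v)$ and with no other vertex. Theorem~\ref{aux-2} would be invoked to cyclically realign $w'$ so that the copies of $v$ can be placed at compatible positions relative to the blocks of $w'$.

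The crux, and what I expect to be the main obstacle of the whole theorem, is proving that such an insertion always exists: one must show that semi-transitivity forces the neighbours of $v$ to be arranged in $w'$ in a pattern admitting a single consistent interleaving of the copies of $v$. Here the shortcut-free hypothesis is exactly what rules out the obstructing configurations, since any failure of insertability should be traced back to a shortcut in the orientation. I anticipate the detailed bookkeeping of where each copy of $v$ must fall relative to the runs of $w'$ determined by the in- and out-neighbours of $v$ to be the heaviest part of the argument. Combining the two implications then yields the stated equivalence.
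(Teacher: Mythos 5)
Note first that the paper does not prove this theorem at all: it is quoted from \cite{HKP16} as a known result, so there is no internal proof to compare against, and your attempt must be judged against the published one. Your forward direction is sound and is exactly the standard argument (it is the orientation of Remark~\ref{rem-1} combined with Theorem~\ref{aux-1}); the ``careful analysis'' you defer is in fact short: writing $\mathrm{pos}(x,m)$ for the position of the $m$-th occurrence of $x$ in the uniform word, each path edge gives $\mathrm{pos}(v_i,m)<\mathrm{pos}(v_{i+1},m)$, the shortcutting edge gives $\mathrm{pos}(v_k,m)<\mathrm{pos}(v_0,m+1)$, and chaining these yields $\mathrm{pos}(v_i,m)<\mathrm{pos}(v_j,m)<\mathrm{pos}(v_i,m+1)$ for all $m$, i.e.\ $v_i$ and $v_j$ alternate, so no edge of the induced subgraph can be missing or misoriented.

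The genuine gap is in the reverse direction, which is the hard half of the theorem, and it is not merely a deferred computation: the plan as stated fails. Your induction hypothesis only says that $G-v$ is word-representable, so the word $w'$ it returns carries no memory of the orientation that $G-v$ inherits from the semi-transitive orientation of $G$, and cyclic shifts (Theorem~\ref{aux-2}) cannot recover that information. Concretely, let $G-v=K_4$ oriented as the transitive tournament $a\rightarrow b\rightarrow c\rightarrow d$, and let $v$ be a sink with $N(v)=\{b,c\}$; this orientation of $G$ is semi-transitive. The induction may legitimately hand you $w'=bacd$, a valid uniform representation of $K_4$. Since every letter of $w'$ occurs once, inserting $k$ copies of $v$ makes $v$ adjacent to everything ($k=1$), to the letters strictly between the two copies, i.e.\ a contiguous factor of the word ($k=2$), or to nothing ($k\geq 3$); and $\{b,c\}$ is not a contiguous factor of $bacd$ or of any of its cyclic shifts. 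So no insertion works, even though $G$ is word-representable (insert into $abcd$ instead: $avbcvd$). Repairing this forces you to strengthen the induction hypothesis so that the word produced \emph{realizes the given orientation} (say, its first-occurrence orientation is exactly the restricted orientation), and in general insertion also requires increasing the multiplicities of the old letters rather than keeping $w'$ fixed --- at which point you are reproving essentially the entire theorem and one vertex at a time is no longer the right unit. This is precisely why the published proof in \cite{HKP16} is a global construction rather than an insertion induction: it fixes a maximal clique (which the orientation turns into a transitive tournament), uses shortcut-freeness to control how all remaining vertices sit relative to that clique, and assembles a representing word directly as a concatenation of permutations, obtaining along the way that each letter can be made to occur at most $2(n-\kappa(G))$ times for non-complete $G$.
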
 

\begin{remark}\label{rem-1} An efficient way to obtain a semi-transitive orientation of a graph $G$ from a word $w$ representing it, which is used in the proof of Theorem~\ref{fundamental}, is as follows: orient an edge $xy$ in $G$ as $x\rightarrow y$ if the leftmost occurrence of $x$ in $w$ is to the left of the leftmost occurrence of $y$ in $w$. Note that  the leftmost letter in $w$ will be a {\em source} in $G$, that is a vertex having edges incident with it oriented away from it.\end{remark}

Recognizing word-representability of a graph is an NP-complete problem \cite{KL15}, and the bottleneck of the theory of word-representable graphs is in ways to convince someone that a graph is non-word-representable keeping in mind that references to (even publicly available and user-friendly) software, such as \cite{G}, are not always welcome. (Word-representability can be justified by providing a semi-transitive orientation as a certificate that can be checked in polynomial time~\cite{KL15}.) 

\subsection{Approaches to deal with non-word-representability} It is known \cite{KP08} that the neighbourhood of each node in a word-representable graph is a comparability graph, and recognition of a comparability graph is a polynomially solvable problem \cite{G76}. Hence, we have a polynomial test for non-word-representability of a graph $G$: for each vertex, check whether its neighbourhood is a comparability graph; if a ``non-comparability neighbourhood'' is found, $G$ is not word-representable. However, if all neighbourhoods are comparability graphs, then the test gives no information, namely, the graph may or may not be word-representable \cite{KL15}. 

Thus, if the test above does not work and a known non-word-representable induced subgraph is not found, basically we are left with three options to recognise and then justify non-word-representability: either 
\begin{itemize}
\item[(a)] to come up with some sort of an ad hoc smart argument, usually using properties and/or symmetries of the graph in question, or its induced subgraph, or 
\item[(b)] to go through $O(2^{\#\mbox{edges}})$ orientations justifying that none of them is semi-transitive (symmetries can be used here sometimes to reduce the search space, in particular, any given edge can be assumed to be oriented in any way), or
\item[(c)] to go through all $O(\#\mbox{vertices}^2)$ words containing each of the vertex labels equal number of times and to justify that none of them has the right alternation properties (if a graph with $n$ vertices is word-representable then there is a word of length at most $n^2$ representing it \cite{HKP16}).
\end{itemize} 

Approach (a) above is preferable, but usually is hard to implement. Approach (c) requires going through $O(n^{2n})$ words, however, constraint programming can be used here to speed up the process \cite{Z}. In either case, how do we convince someone that the graph is non-word-representable without a reference to software? A variation of approach (b) is used in some existing pieces of software \cite{G,S}. It works as follows: orient an edge $e_1$ in a given graph $G$, then consider a still undirected edge $e_2$ in $G$ and branch on it, namely, create two copies of the partially oriented graph by orienting $e_2$ differently; then branch on $e_3$, etc. At each step, make sure that no directed cycles or shortcuts are created (if they are, the respective branch is not to be considered). In any case, even if computer would print out all these orientations (or the entire branching process) and would point at a directed cycle or a shortcut in each of the orientations, such a proof would be essentially non-checkable by a human, as it would typically be a way too long.

\subsection{A game changer approach} In this paper we consider producing ``short'' proofs of non-word-representability dropping the number of cases to be considered from exponential to polynomial, and thus enabling human to verify such proofs. The basic idea is in modifying the branching process by avoiding unnecessary branching via certain pre-processing. The following lemma is the key to our approach. 

\begin{lemma}[\cite{KP}]\label{lemma} Suppose that an undirected graph $G$ has a cycle $C=x_1x_2\cdots x_mx_1$, where $m\geq 4$ and the vertices in $\{x_1,x_2,\ldots,x_m\}$ do not induce a clique in $G$.  If $G$ is oriented semi-transitively, and $m-2$ edges of $C$ are oriented in the same direction (i.e. from $x_i$ to $x_{i+1}$ or vice versa, where the index $m+1:=1$) then the remaining two edges of $C$ are oriented in the opposite direction.\end{lemma}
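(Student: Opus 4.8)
The plan is to argue by contradiction, keeping track of how many edges of $C$ point \emph{forward}, i.e.\ from $x_i$ to $x_{i+1}$. Since reversing every arc of a semi-transitive orientation yields another semi-transitive orientation (acyclicity and the shortcut-free condition are both invariant under reversal) while exchanging forward and backward arcs, I may assume the $m-2$ given edges all point forward; the goal is then to show that the remaining two edges point backward. Equivalently, I will prove that a semi-transitive orientation of $G$ cannot have $m-1$ or $m$ of the edges of $C$ pointing forward. Granting this, the hypothesis already supplies $m-2$ forward edges, so if either of the other two were also forward we would reach $m-1$ forward edges, a contradiction; hence both must point backward.

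Ruling out $m$ forward edges is immediate: in that case $x_1\to x_2\to\cdots\to x_m\to x_1$ is a directed cycle, contradicting acyclicity. The heart of the argument is ruling out exactly $m-1$ forward edges. In this situation there is a unique backward edge, and using the cyclic symmetry of the labelling I would rotate the indices so that this edge is the one between $x_m$ and $x_1$, oriented $x_1\to x_m$. The forward edges then assemble into a directed path $x_1\to x_2\to\cdots\to x_m$ (of length $m-1\ge 3$ since $m\ge 4$), and $x_1x_m$ is an edge of $G$ oriented $x_1\to x_m$. Applying the definition of semi-transitivity to this path, whose endpoints $x_1$ and $x_m$ are joined by an edge, forces $x_i\to x_j$ to be an edge for every $1\le i<j\le m$. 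But then every pair among $x_1,\ldots,x_m$ is adjacent, so these vertices induce a clique, contradicting the hypothesis.

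This last contradiction is exactly where the non-clique assumption is used, and it is the conceptual crux: without it, a transitive tournament on $\{x_1,\ldots,x_m\}$ would realise $m-1$ forward edges semi-transitively. The only obstacle I anticipate is bookkeeping --- making the two ``without loss of generality'' reductions (reversal to fix the $m-2$ edges as forward, and rotation to place the backward edge at $x_mx_1$) precise, and checking that the resulting path is long enough for the semi-transitivity condition to have content, which is guaranteed by $m\ge 4$. I would also remark that for $m=3$ the three cycle vertices are automatically pairwise adjacent and hence always form a clique, which explains why the statement is restricted to $m\ge 4$.
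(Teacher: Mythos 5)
Your proof is correct. Note that the paper itself does not prove this lemma at all---it imports it from the cited reference \cite{KP}---so there is no internal proof to compare against; your argument is the standard one and exactly what is expected: reversal symmetry reduces to the forward case, $m$ forward edges would give a directed cycle (violating acyclicity), and $m-1$ forward edges would give a directed path $x_1\to\cdots\to x_m$ whose endpoints are joined by the edge $x_1\to x_m$, so semi-transitivity would force $\{x_1,\ldots,x_m\}$ to induce a clique, contradicting the hypothesis. Your closing remarks (where the non-clique assumption enters, and why $m\ge 4$ is required) are also accurate.
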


Hence, if we try to find a semi-transitive orientation by exhaustively going through all possibilities to orient one edge at a time, and we see a cycle, that does not induce a clique, with all but two edges oriented in the same direction, we do not need to branch on the remaining two edges as they must be oriented in the opposite direction by Lemma~\ref{lemma}. Similarly, if we see a non-clique cycle with all but two edges oriented in the same direction and an edge $e$ in the cycle oriented in the opposite direction, then we known that the remaining edge is oriented in the same direction as $e$.

The following theorem allows us to reduce further dramatically the length of a proof of non-word-representability for a graph. 

 \begin{theorem}\label{source-thm} Suppose that a graph $G$ is word-representable, and $v$ is a vertex in $G$. Then, there exists a semi-transitive orientation of $G$ where $v$ is a source (or a sink so that all edges incident with $v$ are oriented towards it).   
 \end{theorem}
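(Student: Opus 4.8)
The plan is to combine the three preliminary results---Theorems~\ref{aux-1} and~\ref{aux-2} together with the explicit orientation procedure recorded in Remark~\ref{rem-1}---to manufacture the desired orientation directly from a suitably rearranged word. First, since $G$ is word-representable, Theorem~\ref{aux-1} provides a uniform word $w$ over $V(G)$ that represents $G$. The uniformity is essential for the next step, because Theorem~\ref{aux-2} guarantees that cyclic rotations of $w$ continue to represent $G$ only when $w$ is uniform.

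Next I would rotate $w$ so that $v$ becomes its leftmost letter. Concretely, write $w = w_1 w_2$, where $w_1$ is the (possibly empty) prefix preceding the first occurrence of $v$, so that $w_2$ begins with $v$. By Theorem~\ref{aux-2} the word $w' := w_2 w_1$ also represents $G$, and by construction $v$ is the leftmost letter of $w'$.

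Now apply the orientation procedure of Remark~\ref{rem-1} to $w'$: orient each edge $xy$ as $x\rightarrow y$ precisely when the leftmost occurrence of $x$ precedes that of $y$ in $w'$. Since $w'$ represents $G$, this is exactly the orientation used in the proof of Theorem~\ref{fundamental}, so it is semi-transitive. Because $v$ is the leftmost letter of $w'$, its leftmost occurrence precedes that of every other vertex, whence every edge incident with $v$ is oriented away from $v$; that is, $v$ is a source. This settles the ``source'' half of the statement.

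Finally, for the ``sink'' alternative I would invoke a reversal symmetry: reversing the direction of every edge of a semi-transitive orientation yields another semi-transitive orientation, since both acyclicity and the shortcut-free condition are invariant under reversing all arcs (a directed path with a shortcutting edge simply becomes a directed path with a shortcutting edge traversed in the opposite order), and this operation turns a source into a sink. Applying it to the orientation just constructed produces a semi-transitive orientation in which $v$ is a sink. I expect the only delicate point to be justifying that the rotated word $w'$ still represents $G$---this is precisely where uniformity of $w$ is needed and where Theorem~\ref{aux-2} does the work---while the passage from a representing word to a semi-transitive orientation, and the source/sink interchange, are immediate from Remark~\ref{rem-1} and the symmetry of the definition of semi-transitivity.
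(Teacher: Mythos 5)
Your proposal is correct and is essentially identical to the paper's own proof: it uses Theorem~\ref{aux-1} to get a uniform word, Theorem~\ref{aux-2} to rotate it so that $v$ is the leftmost letter, Remark~\ref{rem-1} to extract a semi-transitive orientation with $v$ as a source, and reversal of all edges for the sink case. The only difference is that you spell out the rotation and the reversal-invariance of semi-transitivity in more detail than the paper does.
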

 
 \begin{proof}
 Let $w$ be a word representing $G$. By Theorem~\ref{aux-1}, we can assume that $w$ is uniform, and by Theorem~\ref{aux-2} that $w$ begins with $v$. Then we can use Remark~\ref{rem-1} to obtain a semi-transitive orientation of $G$ in which $v$ is a source. Reversing the orientations of all edges in $G$, one obtains a semi-transitive orientation in which $v$ is a sink.
 \end{proof}

To demonstrate the power of Theorem~\ref{source-thm}, just applying Lemma~\ref{lemma} in the branching process for the wheel graph $W_5$, one needs to branch 6 times. However, assuming WLOG by Theorem~\ref{source-thm} that the all-adjacent vertex in $W_5$ is a source, one needs to branch just once. In fact, using the symmetry of $W_5$, this only branching is not necessary, so non-word-representability of $W_5$ can be proved by Lemma~\ref{lemma} and Theorem~\ref{source-thm} and observing the symmetry without branching. Usually, picking a vertex of the highest degree in a given non-word-representable graph and assuming it to be a source would result in a shortest proof. However, there are cases when assuming a vertex of a smaller degree to be a source results in a shorter proof. Also, note that assuming a particular vertex to be a source may result in a longer proof than making no assumption at all.

\begin{center}
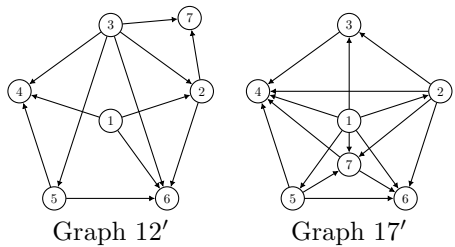
\begin{figure}
\begin{center}
\begin{tabular}{c c c c c}

\begin{tikzpicture}[scale=0.3]

\draw (0,0) node [scale=0.5, circle, draw](node5){5};
\draw (5,0) node [scale=0.5, circle, draw](node6){6};
\draw (6.54,4.75) node [scale=0.5, circle, draw](node2){2};
\draw (2.50,7.69) node [scale=0.5, circle, draw](node3){3};
\draw (-1.54,4.75) node [scale=0.5, circle, draw](node4){4};
\draw (6,8) node [scale=0.5, circle, draw](node7){7};
\draw(2.5,3.44)node [scale=0.5, circle, draw](node1){1};

\draw [->](node1)--(node2);   

\draw [->](node1)--(node4);
 
\draw [->](node1)--(node6);
\draw [->](node2)--(node7);
\draw [->](node2)--(node6);

\draw [->](node3)--(node7);
\draw [->](node3)--(node2);
\draw [->](node3)--(node5);
\draw [->](node3)--(node6);
\draw [->](node3)--(node4);

\draw [->](node5)--(node6);
\draw [->](node5)--(node4);

\draw (2.5,-1.5) node {Graph $12'$};

\end{tikzpicture}

& 

\begin{tikzpicture}[scale=0.3]

\draw (0,0) node [scale=0.5, circle, draw](node5){5};
\draw (5,0) node [scale=0.5, circle, draw](node6){6};
\draw (6.54,4.75) node [scale=0.5, circle, draw](node2){2};
\draw (2.50,7.69) node [scale=0.5, circle, draw](node3){3};
\draw (-1.54,4.75) node [scale=0.5, circle, draw](node4){4};
\draw (2.5,1.5) node [scale=0.5, circle, draw](node7){7};
\draw(2.5,3.44)node [scale=0.5, circle, draw](node1){1};

\draw [->](node1)--(node2);
\draw [->](node1)--(node3);
\draw [->](node1)--(node4);
\draw [->](node1)--(node5);
\draw [->](node1)--(node6);
\draw [->](node1)--(node7);

\draw [->](node2)--(node3);
\draw [->](node2)--(node4);
\draw [->](node2)--(node6);
\draw [->](node2)--(node7);

\draw [->](node3)--(node4);

\draw [->](node5)--(node6);
\draw [->](node5)--(node7);
\draw [->](node5)--(node4);

\draw [->](node7)--(node6);
\draw [->](node7)--(node4);

\draw (2.5,-1.5) node {Graph $17'$};

\end{tikzpicture}

\end{tabular}
\caption{The undirected versions of Graphs $12'$ and $17'$ were assumed to be non-word-representable several times in the literature, for example in \cite{KL15}, although they are actually word-representable as is witnessed by the semi-transitive orientations given in the figure. Graph $12'$ misses the edge (1,3). Graph $17'$ should not have the edge (1,7).}\label{two-graphs}
\end{center}
\end{figure}
\end{center}

In Section~\ref{algorithms}, we introduce three algorithms getting use of Lemma~\ref{lemma} to generate shorter proofs for non-word-representable graphs. The primarily criteria of the efficiency of an algorithm is the number of ``lines'' (in the sense specified below) in the proof it produces; the secondary criteria is the running time of the algorithm, whose discussion is omitted from this paper as less relevant (the problem is NP-complete, so the running time is exponential, and further details are not so important for us). We test and rank our algorithms on the 25 non-word-representable graphs on 7 vertices correcting, as a by-product, two mistakes published multiple times, e.g.\ in \cite{KL15}. Indeed, two graphs out of the 25 graphs were produced incorrectly. These incorrect graphs are the undirected versions of the semi-transitively oriented graphs in  Figure~\ref{two-graphs}. We leave it to the Reader as a straightforward exercise to prove that the orientations in Figure~\ref{two-graphs} are indeed semi-transitive. A correct list of the 25 non-word-representable graphs can be found in Figure~\ref{25-non-word-repr}. We note that in our test we choose not to use Theorem~\ref{source-thm}; using this theorem and going through all possible assumptions about a source may give a different outcome for comparison of the algorithms.

We would like to emphasise that the approach involving Lemma~\ref{lemma} is not novel: several papers, including \cite{KP}, use it or its simpler version (considering cycles of length 3 and 4 only) to justify non-word-representability. However, our paper is the first one to discuss an automated search of human-verifiable proofs of graph non-word-representability that allowed us to create publicly available user-friendly software \cite{S}. Moreover, our Theorem~\ref{source-thm} enlarges significantly the class of graphs for which ``reasonably short'' proofs of non-word-representability can be produced. As a proof-of-concept,  we use the software \cite{S} to find ``short'' proofs of non-word-representability, generated automatically, of the {\em Shrikhande graph} on 16 vertices and 48 edges (6 ``lines'' of proof; see Section~\ref{Shrikhande}) and the {\em Clebsch graph} on 16 vertices and 40 edges (10 ``lines'' of proof; see Section~\ref{Clebsch}). Proving in a verifiable way (without referring to computer software) that the Clebsch graph and the Shrikhande graph are non-word-representable would be extremely challenging without the tools we introduce.  

\section{Three algorithms to search for short proofs of non-word-representability}\label{algorithms}

In this section, we consider three algorithms to find shorter proofs of non-word-representability of graphs. All three algorithms use the observation that the branching process should not involve any edges that do not belong to a cycle, as such edges can be oriented arbitrarily (they will never be involved in a directed cycle of a shortcut). Further, all three algorithms use the assumption that to produce a shorter proof, branching should be made on edges belonging to many cycles (which should increase the number of applications of Lemma~\ref{lemma}). 

\subsection{The format of a proof}

By a ``line'' of a proof we mean a sequence of instructions that directs us in orienting a partially oriented graph and necessarily ends with detecting a shortcut or another contradiction showing that this particular orientation branch will not produce a semi-transitive orientation. The idea is that if no branch produces a semi-transitive orientation then the graph is non-semi-transitively orientable (and hence non-word-representable by Theorem~\ref{fundamental}). 

Each proof begins with $A\rightarrow B$ showing the orientation of an edge $AB$, the first edge we orient, that is selected by an algorithm in a certain way. Because reversing all orientations in a semi-transitively oriented graph produces a semi-transitively oriented graph, WLOG we can omit considering (partially) oriented graphs having $B\rightarrow A$, which significantly reduces the number of cases to consider. Further, there are four types of instructions:
\begin{itemize}
\item ``MC'' followed by a number $X$ means ``Move to Copy $X$'', where Copy $X$ of the graph in question is a partially oriented version of the graph that was created at some point in the branching process. This instruction is always followed by an oriented edge $A\rightarrow B$ reminding on the directed edge obtained after application of the branching process; see description of ``B'' to be discussed next. 
\item ``B'' followed by ``$X\rightarrow Y$ (Copy $Z$)'' means ``Branch on edge $XY$, orient the edge as $X\rightarrow Y$, create a copy of the current version of the graph except orient the edge $XY$ there as $Y\rightarrow X$, and call the new copy $Z$; leave Copy $Z$ aside and continue to follow the instructions''. The instruction B occurs when the software detects that no application of Lemma~\ref{lemma} is possible in the partially oriented graph.
\item One ``O'' followed by ``$X\rightarrow Y$'', in turn followed by, in brackets, ``C'' followed by a cycle ``$X$-$Y$-$Z$''. This instruction tells us to orient the edge $XY$ as $X\rightarrow Y$ because otherwise, in the triangle $XYZ$, we would get a directed cycle. If instead of a triangle we see a longer cycle, then we deal with an application of Lemma~\ref{lemma} to a cycle where all but two edges are oriented in one direction, and one of the remaining two edges is oriented in the opposite direction.
\item Two ``O''s followed by ``$X\rightarrow Y$'', in turn followed by, in brackets, ``C'' followed by a cycle ``$X$-$Y$-$Z$-$\cdots$''. This instruction tells us to which cycle Lemma~\ref{lemma} can be applied and which edges will become oriented.
\end{itemize}

Each line ends with either ``$S:X-Y-\cdots-Z$'' or with ``$E:X-Y-\cdots-Z$''. In the former case, a shortcut with the shortcutting edge $X\rightarrow Z$ is obtained, while in the latter case, all but one edges in the non-clique cycle $X-Y-\cdots-Z$ are oriented in the same direction, while the remaining edge $e$ is not oriented, which is a contradiction since there is no way to orient $e$ without creating a shortcut or a directed cycle (``$E$'' stands for ``Error''). In the proofs in this paper, the symbol ``$E$'' does not appear, but it appears occasionally when the software \cite{S} is used. 

Next, we will explain parts of the first two lines, given in Figure~\ref{2-lines}, in the proof of non-word-representability of the Shrikhande graph in Figure~\ref{Shrikhande-pic} that does not used Theorem~\ref{source-thm} (and hence is different from the proof in Section~\ref{Shrikhande}).

\begin{figure}
\noindent\rule{12cm}{0.4pt}

\begin{small}
{\bf 1.} 12$\rightarrow$15 B14$\rightarrow$15 (Copy 2) B12$\rightarrow$14 (Copy 3) O7$\rightarrow$15 O12$\rightarrow$7 (C7-15-14-12) {\bf [other instructions]} S:7-4-8-16

{\bf 2.} MC4 16$\rightarrow$7 O16$\rightarrow$15 (C7-16-15)  {\bf [other instructions]}  S:4-11-3-7
\end{small}

\vspace{-0.2cm}
\noindent\rule{12cm}{0.4pt}
\caption{Parts of the first two lines in the proof of non-word-representability of the Shrikhande graph in Figure~\ref{Shrikhande-pic}}\label{2-lines}
\end{figure}

To begin checking the proof, one should arrange 9 undirected copies of the Shrikhande graph, possibly printed on a single page. Begin with orienting edge 12$\rightarrow$15 in the first copy of the graph. Branching is necessary at this stage, we orient edge 14$\rightarrow$15 in Copy 1 and create partially oriented Copy 2 currently having edges 12$\rightarrow$15  and 15$\rightarrow$14. We continue with considering Copy 1. Another branching is required, and we orient the edge 12$\rightarrow$14 and create partially oriented Copy 3 currently having edges 12$\rightarrow$15, 14$\rightarrow$15 and 14$\rightarrow$12. Looking at the cycle 7-15-14-12 in Copy 1, we can see that Lemma~\ref{lemma} can be applied and we can orient edges 7$\rightarrow$15  and 12$\rightarrow$7. Continuing following the instructions, we see that the shortcut 7-4-8-16 will eventually be obtained in Copy 1 showing that Copy 1 can now be disregarded as any way to complete its orientation will result in a shortcut being present (so that the orientation would be non-semi-transitive). 

We can now consider any of the three partially oriented copies of the graph (Copies 2, 3, 4). Our algorithm suggests considering the latest created copy (Copy 4) that has the most number of oriented edges.  MC4 instructs us to do so, and 16$\rightarrow$7 reminds us on the correct orientation of the edge (7,16) obtained as the result of the branching process (when Copy 4 was created). Next, we look at the triangle 7-16-15 where we must orient edge 16$\rightarrow$15 or else we obtain a directed cycle of length 3. Continuing following the instructions, we see that the shortcut 4-11-3-7 will eventually be obtained in Copy 4 showing that Copy 4 can now be disregarded, and another copy should be considered.

\subsection{Algorithm 1}
Algorithm 1 sorts edges according to the number of cycles they are in, then branches on an edge belonging to the most number of cycles (whenever branching is required). If there are two or more such edges, the choice on branching is done lexicographically.

\subsection{Algorithm 2}
Algorithm 2 selects a cycle $C$ with the smallest number of non-oriented edges. The non-oriented edges in $C$ are sorted, similarly to Algorithm 1, based on  the number of cycles they are in and branching is done on an edge belonging to the most number of cycles. If there are two or more such edges, the choice on branching is done lexicographically.

\subsection{Algorithm 3}
Algorithm 3 is similar to Algorithm 2, but it selects a cycle hat has the biggest number $N$ of edges oriented in the same direction. Among the cycles with the same $N$, Algorithm 3 selects a cycle $C$ that has smallest number of non-oriented edges. Then, similarly to Algorithm 2, the non-oriented edges in $C$ are sorted based on  the number of cycles they are in and branching is done on an edge belonging to the most number of cycles. If there are two or more such edges, the choice on branching is done lexicographically.

\subsection{Ranking of algorithms}

\begin{figure}
\begin{center}
\begin{tabular}{c c c c c}
\begin{tikzpicture}[scale=0.25]

\draw (0,0) node [scale=0.5, circle, draw](node5){5};
\draw (5,0) node [scale=0.5, circle, draw](node6){6};
\draw (6.54,4.75) node [scale=0.5, circle, draw](node2){2};
\draw (2.50,7.69) node [scale=0.5, circle, draw](node3){3};
\draw (-1.54,4.75) node [scale=0.5, circle, draw](node4){4};
\draw (5,8.5) node [scale=0.5, circle, draw](node7){7};
\draw(2.5,3.44)node [scale=0.5, circle, draw](node1){1};

\draw (node6)--(node2)--(node3)--(node4)--(node5)--(node6)  ;
\draw (node1)--(node2);
\draw (node1)--(node3);
\draw (node1)--(node4);
\draw (node1)--(node5);
\draw (node1)--(node6);
\draw (node3)--(node7);
\draw (2.5,-1.5) node {Graph 1};
\end{tikzpicture} 

&
 
\begin{tikzpicture}[scale=0.25]

\draw (0,0) node [scale=0.5, circle, draw](node5){5};
\draw (5,0) node [scale=0.5, circle, draw](node6){6};
\draw (6.54,4.75) node [scale=0.5, circle, draw](node2){2};
\draw (2.50,7.69) node [scale=0.5, circle, draw](node3){3};
\draw (-1.54,4.75) node [scale=0.5, circle, draw](node4){4};
\draw (2.5,9.5) node [scale=0.5, circle, draw](node7){7};
\draw(2.5,3.44)node [scale=0.5, circle, draw](node1){1};

\draw (node6)--(node2)--(node3)--(node4)--(node5)--(node6)  ;
\draw (node1)--(node2);
\draw (node1)--(node3);
\draw (node1)--(node4);
\draw (node1)--(node5);
\draw (node1)--(node6);
\draw (node4)--(node7);
\draw (node2)--(node7);
\draw (2.5,-1.5) node {Graph 2};
\end{tikzpicture}
& 
\begin{tikzpicture}[scale=0.25]

\draw (0,0) node [scale=0.5, circle, draw](node5){5};
\draw (5,0) node [scale=0.5, circle, draw](node6){6};
\draw (6.54,4.75) node [scale=0.5, circle, draw](node2){2};
\draw (2.50,7.69) node [scale=0.5, circle, draw](node3){3};
\draw (-1.54,4.75) node [scale=0.5, circle, draw](node4){4};
\draw (2.5,9.5) node [scale=0.5, circle, draw](node7){7};
\draw(2.5,3.44)node [scale=0.5, circle, draw](node1){1};

\draw (node6)--(node2)--(node3)--(node4)--(node5)--(node6)  ;
\draw (node1)--(node2);
\draw (node1)--(node3);
\draw (node1)--(node4);
\draw (node1)--(node5);
\draw (node1)--(node6);
\draw (node4)--(node7);
\draw (node2)--(node7);
\draw (node3)--(node7);
\draw (2.5,-1.5) node {Graph 3};
\end{tikzpicture} 					
& 
\begin{tikzpicture}[scale=0.25]

\draw (0,0) node [scale=0.5, circle, draw](node5){5};
\draw (5,0) node [scale=0.5, circle, draw](node6){6};
\draw (6.54,4.75) node [scale=0.5, circle, draw](node2){2};
\draw (2.50,7.69) node [scale=0.5, circle, draw](node3){3};
\draw (-1.54,4.75) node [scale=0.5, circle, draw](node4){4};
\draw (2.5,9.5) node [scale=0.5, circle, draw](node7){7};
\draw(2.5,3.44)node [scale=0.5, circle, draw](node1){1};

\draw (node6)--(node2)--(node3)--(node4)--(node5)--(node6)  ;
\draw (node1)--(node2);
\draw (node1)--(node3);
\draw (node1)--(node4);
\draw (node1)--(node5);
\draw (node1)--(node6);
\draw (node4)--(node7);
\draw (node2)--(node7);
\draw    (node1) to[out=60,in=-60] (node7);
\draw (2.5,-1.5) node {Graph 4};

\end{tikzpicture}

&
 
\begin{tikzpicture}[scale=0.25]

\draw (0,0) node [scale=0.5, circle, draw](node5){5};
\draw (5,0) node [scale=0.5, circle, draw](node6){6};
\draw (6.54,4.75) node [scale=0.5, circle, draw](node2){2};
\draw (2.50,7.69) node [scale=0.5, circle, draw](node3){3};
\draw (-1.54,4.75) node [scale=0.5, circle, draw](node4){4};
\draw (6,8) node [scale=0.5, circle, draw](node7){7};
\draw(2.5,3.44)node [scale=0.5, circle, draw](node1){1};

\draw (node6)--(node2)--(node3)--(node4)--(node5)--(node6)  ;
\draw (node1)--(node2);
\draw (node1)--(node3);
\draw (node1)--(node4);
\draw (node1)--(node5);
\draw (node1)--(node6);
\draw (node1)--(node7);
\draw (2.5,-1.5) node {Graph 5};

\end{tikzpicture}
\\

\begin{tikzpicture}[scale=0.25]

\draw (0,0) node [scale=0.5, circle, draw](node5){5};
\draw (5,0) node [scale=0.5, circle, draw](node6){6};
\draw (6.54,4.75) node [scale=0.5, circle, draw](node2){2};
\draw (2.50,7.69) node [scale=0.5, circle, draw](node3){3};
\draw (-1.54,4.75) node [scale=0.5, circle, draw](node4){4};
\draw (6,8) node [scale=0.5, circle, draw](node7){7};
\draw(2.5,3.44)node [scale=0.5, circle, draw](node1){1};

\draw (node6)--(node2)--(node3)--(node4)--(node5)--(node6)  ;
\draw (node1)--(node2);
\draw (node1)--(node3);
\draw (node1)--(node4);
\draw (node1)--(node5);
\draw (node1)--(node6);
\draw (node1)--(node7);
\draw (node3)--(node7);

\draw (2.5,-1.5) node {Graph 6};

\end{tikzpicture}
& 
\begin{tikzpicture}[scale=0.25]

\draw (0,0) node [scale=0.5, circle, draw](node5){5};
\draw (5,0) node [scale=0.5, circle, draw](node6){6};
\draw (6.54,4.75) node [scale=0.5, circle, draw](node2){2};
\draw (2.50,7.69) node [scale=0.5, circle, draw](node3){3};
\draw (-1.54,4.75) node [scale=0.5, circle, draw](node4){4};
\draw (6,8) node [scale=0.5, circle, draw](node7){7};
\draw(2.5,3.44)node [scale=0.5, circle, draw](node1){1};

\draw (node6)--(node2)--(node3)--(node4)--(node5)--(node6)  ;
\draw (node1)--(node2);
\draw (node1)--(node3);
\draw (node1)--(node4);
\draw (node1)--(node5);
\draw (node1)--(node6);
\draw (node2)--(node7);
\draw (node3)--(node7);

\draw (2.5,-1.5) node {Graph 7};

\end{tikzpicture}
&  
\begin{tikzpicture}[scale=0.25]

\draw (0,0) node [scale=0.5, circle, draw](node5){5};
\draw (5,0) node [scale=0.5, circle, draw](node6){6};
\draw (6.54,4.75) node [scale=0.5, circle, draw](node2){2};
\draw (2.50,7.69) node [scale=0.5, circle, draw](node3){3};
\draw (-1.54,4.75) node [scale=0.5, circle, draw](node4){4};
\draw (6,8) node [scale=0.5, circle, draw](node7){7};
\draw(2.5,3.44)node [scale=0.5, circle, draw](node1){1};

\draw (node6)--(node2)--(node3)--(node4)--(node5)--(node6)  ;
\draw (node1)--(node2);
\draw (node1)--(node3);
\draw (node1)--(node4);
\draw (node1)--(node5);
\draw (node1)--(node6);
\draw (node2)--(node7);
\draw (node3)--(node7);
\draw (node1)--(node7);
\draw (2.5,-1.5) node {Graph 8};

\end{tikzpicture}
&

\begin{tikzpicture}[scale=0.25]

\draw (0,0) node [scale=0.5, circle, draw](node5){5};
\draw (5,0) node [scale=0.5, circle, draw](node6){6};
\draw (6.54,4.75) node [scale=0.5, circle, draw](node2){2};
\draw (2.50,7.69) node [scale=0.5, circle, draw](node3){3};
\draw (-1.54,4.75) node [scale=0.5, circle, draw](node4){4};
\draw (6,8) node [scale=0.5, circle, draw](node7){7};
\draw(2.5,3.44)node [scale=0.5, circle, draw](node1){1};

\draw (node6)--(node2)--(node3)--(node4)--(node5)--(node6)  ;
\draw (node1)--(node2);
\draw (node1)--(node3);
\draw (node1)--(node4);
\draw (node1)--(node5);
\draw (node1)--(node6);
\draw (node2)--(node7);
\draw (node3)--(node7);
\draw (node5)to[out=20,in=-100](node7);
\draw (2.5,-1.5) node {Graph 9};

\end{tikzpicture}
&
\begin{tikzpicture}[scale=0.25]

\draw (0,0) node [scale=0.5, circle, draw](node5){5};
\draw (5,0) node [scale=0.5, circle, draw](node6){6};
\draw (6.54,4.75) node [scale=0.5, circle, draw](node2){2};
\draw (2.50,7.69) node [scale=0.5, circle, draw](node3){3};
\draw (-1.54,4.75) node [scale=0.5, circle, draw](node4){4};
\draw (6,8) node [scale=0.5, circle, draw](node7){7};
\draw(2.5,3.44)node [scale=0.5, circle, draw](node1){1};

\draw (node6)--(node2)--(node3)--(node4)--(node5)--(node6)  ;
\draw (node1)--(node2);
\draw (node1)--(node3);
\draw (node1)--(node4);
\draw (node1)--(node5);
\draw (node1)--(node6);
\draw (node2)--(node7);
\draw (node3)--(node7);
\draw (node5)to[out=20,in=-100](node7);
\draw (node1)--(node7);
\draw (2.5,-1.5) node {Graph 10};

\end{tikzpicture}
\\

\begin{tikzpicture}[scale=0.25]

\draw (0,0) node [scale=0.5, circle, draw](node5){5};
\draw (5,0) node [scale=0.5, circle, draw](node6){6};
\draw (6.54,4.75) node [scale=0.5, circle, draw](node2){2};
\draw (2.50,7.69) node [scale=0.5, circle, draw](node3){3};
\draw (-1.54,4.75) node [scale=0.5, circle, draw](node4){4};
\draw (6,8) node [scale=0.5, circle, draw](node7){7};
\draw(2.5,3.44)node [scale=0.5, circle, draw](node1){1};

\draw (node6)--(node2)--(node3)--(node4)--(node5)--(node6)  ;
\draw (node1)--(node2);
\draw (node1)--(node3);
\draw (node1)--(node4);
\draw (node1)--(node5);
\draw (node1)--(node6);

\draw (node2)--(node7);
\draw (node3)--(node7);
\draw (node6)--(node7);
\draw (node1)--(node7);
\draw (2.5,-1.5) node {Graph 11};

\end{tikzpicture}
& 
\begin{tikzpicture}[scale=0.25]

\draw (0,0) node [scale=0.5, circle, draw](node5){5};
\draw (5,0) node [scale=0.5, circle, draw](node6){6};
\draw (6.54,4.75) node [scale=0.5, circle, draw](node2){2};
\draw (2.50,7.69) node [scale=0.5, circle, draw](node3){3};
\draw (-1.54,4.75) node [scale=0.5, circle, draw](node4){4};
\draw (6,8) node [scale=0.5, circle, draw](node7){7};
\draw(2.5,3.44)node [scale=0.5, circle, draw](node1){1};

\draw (node6)--(node2)--(node3)--(node4)--(node5)--(node6)  ;
\draw (node1)--(node2);
\draw (node1)--(node3);
\draw (node1)--(node4);
 
\draw (node1)--(node6);
\draw (node2)--(node7);
\draw (node3)--(node7);
\draw (node3)--(node5);
\draw (node3)--(node6);
\draw (node3)--(node1);
\draw (2.5,-1.5) node {Graph 12};

\end{tikzpicture}

&

\begin{tikzpicture}[scale=0.25]

\draw (0,0) node [scale=0.5, circle, draw](node5){5};
\draw (5,0) node [scale=0.5, circle, draw](node6){6};
\draw (6.54,4.75) node [scale=0.5, circle, draw](node2){2};
\draw (2.50,7.69) node [scale=0.5, circle, draw](node3){3};
\draw (-1.54,4.75) node [scale=0.5, circle, draw](node4){4};
\draw (2.5,1.5) node [scale=0.5, circle, draw](node7){7};
\draw(2.5,3.44)node [scale=0.5, circle, draw](node1){1};

\draw (node6)--(node2)--(node3)--(node4)--(node5)--(node6)  ;

\draw (node1)--(node3);

\draw (node1)--(node5);
\draw (node1)--(node6);
\draw (node7)--(node6);
\draw (node7)--(node2);
\draw (node7)--(node4);
\draw (node7)--(node5);
\draw (2.5,-1.5) node {Graph 13};

\end{tikzpicture}
& 
\begin{tikzpicture}[scale=0.25]

\draw (0,0) node [scale=0.5, circle, draw](node5){5};
\draw (5,0) node [scale=0.5, circle, draw](node6){6};
\draw (6.54,4.75) node [scale=0.5, circle, draw](node2){2};
\draw (2.50,7.69) node [scale=0.5, circle, draw](node3){3};
\draw (-1.54,4.75) node [scale=0.5, circle, draw](node4){4};
\draw (2.5,1.5) node [scale=0.5, circle, draw](node7){7};
\draw(2.5,3.44)node [scale=0.5, circle, draw](node1){1};

\draw (node6)--(node2)--(node3)--(node4)--(node5)--(node6)  ;

\draw (node1)--(node3);

\draw (node1)--(node5);
\draw (node1)--(node6);
\draw (node7)--(node6);
\draw (node7)--(node2);
\draw (node7)--(node4);
\draw (node7)--(node5);
\draw (node1)--(node4);
\draw (2.5,-1.5) node {Graph 14};

\end{tikzpicture}
& 
\begin{tikzpicture}[scale=0.25]

\draw (0,0) node [scale=0.5, circle, draw](node5){5};
\draw (5,0) node [scale=0.5, circle, draw](node6){6};
\draw (6.54,4.75) node [scale=0.5, circle, draw](node2){2};
\draw (2.50,7.69) node [scale=0.5, circle, draw](node3){3};
\draw (-1.54,4.75) node [scale=0.5, circle, draw](node4){4};
\draw (2.5,1.5) node [scale=0.5, circle, draw](node7){7};
\draw(2.5,3.44)node [scale=0.5, circle, draw](node1){1};

\draw (node6)--(node2)--(node3)--(node4)--(node5)--(node6)  ;

\draw (node1)--(node3);

\draw (node1)--(node5);
\draw (node1)--(node6);
\draw (node7)--(node6);
\draw (node7)--(node2);
\draw (node7)--(node4);
\draw (node7)--(node5);

\draw (node1)--(node4);
\draw (node2)--(node1);

\draw (2.5,-1.5) node {Graph 15};

\end{tikzpicture}
\\

\begin{tikzpicture}[scale=0.25]

\draw (0,0) node [scale=0.5, circle, draw](node5){5};
\draw (5,0) node [scale=0.5, circle, draw](node6){6};
\draw (6.54,4.75) node [scale=0.5, circle, draw](node2){2};
\draw (2.50,7.69) node [scale=0.5, circle, draw](node3){3};
\draw (-1.54,4.75) node [scale=0.5, circle, draw](node4){4};
\draw (2.5,1.5) node [scale=0.5, circle, draw](node7){7};
\draw(2.5,3.44)node [scale=0.5, circle, draw](node1){1};

\draw (node6)--(node2)--(node3)--(node4)--(node5)--(node6)  ;

\draw (node1)--(node3);

\draw (node1)--(node5);
\draw (node1)--(node6);
\draw (node7)--(node6);
\draw (node7)--(node2);
\draw (node7)--(node4);
\draw (node7)--(node5);

\draw (node1)--(node4);
\draw (node2)--(node1);
\draw (node7)--(node1);
\draw (2.5,-1.5) node {Graph 16};

\end{tikzpicture}
&

\begin{tikzpicture}[scale=0.25]

\draw (0,0) node [scale=0.5, circle, draw](node5){5};
\draw (5,0) node [scale=0.5, circle, draw](node6){6};
\draw (6.54,4.75) node [scale=0.5, circle, draw](node2){2};
\draw (2.50,7.69) node [scale=0.5, circle, draw](node3){3};
\draw (-1.54,4.75) node [scale=0.5, circle, draw](node4){4};
\draw (2.5,1.5) node [scale=0.5, circle, draw](node7){7};
\draw(2.5,3.44)node [scale=0.5, circle, draw](node1){1};

\draw (node6)--(node2)--(node3)--(node4)--(node5)--(node6)  ;

\draw (node1)--(node3);

\draw (node1)--(node5);
\draw (node1)--(node6);
\draw (node7)--(node6);
\draw (node7)--(node2);
\draw (node7)--(node4);
\draw (node7)--(node5);

\draw (node1)--(node4);
\draw (node2)--(node1);
\draw (node2)--(node4);
\draw (2.5,-1.5) node {Graph 17};

\end{tikzpicture}
&
\begin{tikzpicture}[scale=0.25]

\draw (0,0) node [scale=0.5, circle, draw](node5){5};
\draw (5,0) node [scale=0.5, circle, draw](node6){6};
\draw (6.54,4.75) node [scale=0.5, circle, draw](node2){2};
\draw (2.50,7.69) node [scale=0.5, circle, draw](node3){3};
\draw (-1.54,4.75) node [scale=0.5, circle, draw](node4){4};
\draw (2.5,1.5) node [scale=0.5, circle, draw](node7){7};
\draw(2.5,3.44)node [scale=0.5, circle, draw](node1){1};

\draw (node6)--(node2)--(node3)--(node4)--(node5)--(node6)  ;

\draw (node1)--(node3);

\draw (node1)--(node5);
\draw (node1)--(node6);
\draw (node7)--(node6);
\draw (node7)--(node2);
\draw (node7)--(node4);
\draw (node7)--(node5);

\draw (node1)--(node4);
\draw (node2)--(node1);
\draw (node7)to[out=60,in=-60](node3);
\draw (2.5,-1.5) node {Graph 18};

\end{tikzpicture}
&
\begin{tikzpicture}[scale=0.25]

\draw (0,0) node [scale=0.5, circle, draw](node5){5};
\draw (5,0) node [scale=0.5, circle, draw](node6){6};
\draw (6.54,4.75) node [scale=0.5, circle, draw](node2){2};
\draw (2.50,7.69) node [scale=0.5, circle, draw](node3){3};
\draw (-1.54,4.75) node [scale=0.5, circle, draw](node4){4};
\draw (2.5,1.5) node [scale=0.5, circle, draw](node7){7};
\draw(2.5,3.44)node [scale=0.5, circle, draw](node1){1};

\draw (node6)--(node2)--(node3)--(node4)--(node5)--(node6)  ;

\draw (node1)--(node3);

\draw (node1)--(node5);
\draw (node1)--(node6);
\draw (node7)--(node6);
\draw (node7)--(node2);
\draw (node7)--(node4);
\draw (node7)--(node5);
\draw (node7)--(node1);
\draw (node1)--(node4);
\draw (node2)--(node1);
\draw (node7)to[out=60,in=-60](node3);
\draw (2.5,-1.5) node {Graph 19};

\end{tikzpicture}
&
\begin{tikzpicture}[scale=0.25]

\draw (0,0) node [scale=0.5, circle, draw](node5){5};
\draw (5,0) node [scale=0.5, circle, draw](node6){6};
\draw (6.54,4.75) node [scale=0.5, circle, draw](node2){2};
\draw (2.50,7.69) node [scale=0.5, circle, draw](node3){3};
\draw (-1.54,4.75) node [scale=0.5, circle, draw](node4){4};
\draw (2.5,1.5) node [scale=0.5, circle, draw](node7){7};
\draw(2.5,3.44)node [scale=0.5, circle, draw](node1){1};

\draw (node6)--(node2)--(node3)--(node4)--(node5)--(node6)  ;

\draw (node1)--(node3);

\draw (node1)--(node5);
\draw (node1)--(node6);
\draw (node7)--(node6);
\draw (node7)--(node2);
\draw (node7)--(node4);
\draw (node7)--(node5);
\draw (node7)--(node1);
\draw (node1)--(node4);
\draw (node2)--(node1);
\draw (node2)--(node4);

\draw (node7)to[out=60,in=-60](node3);
\draw (2.5,-1.5) node {Graph 20};

\end{tikzpicture}

\\

\begin{tikzpicture}[scale=0.25]

\draw (0,0) node [scale=0.5, circle, draw](node1){1};
\draw (7.54,0) node [scale=0.5, circle, draw](node2){2};
\draw (0,7.54) node [scale=0.5, circle, draw](node4){4};
\draw (7.54,7.54) node [scale=0.5, circle, draw](node3){3};
\draw (3,3) node [scale=0.5, circle, draw](node6){6};
\draw (3,1.5) node [scale=0.5, circle, draw](node5){5};
\draw(1.5,3)node [scale=0.5, circle, draw](node7){7};

\draw (node1)--(node2)--(node3)--(node4)--(node1)  ;
\draw (node4)--(node2);
\draw (node3)--(node6);
\draw (node1)--(node6);
\draw (node2)--(node6);
\draw (node7)--(node6);
\draw (node1)--(node7);
\draw (node1)--(node5);
\draw (node6)--(node5);
\draw (node2)--(node5);

\draw (4,-1.5) node {Graph 21};
\end{tikzpicture}

&

\begin{tikzpicture}[scale=0.25]

\draw (0,0) node [scale=0.5, circle, draw](node6){6};
\draw (4,6.84) node [scale=0.5, circle, draw](node5){5};
\draw (8,0) node [scale=0.5, circle, draw](node7){7};
\draw (4,2.33) node [scale=0.5, circle, draw](node1){1};
\draw (2,3.42) node [scale=0.5, circle, draw](node3){3};
\draw (6, 3.42) node [scale=0.5, circle, draw](node2){2};
\draw (4,0)node [scale=0.5, circle, draw](node4){4};

\draw (node6)--(node3)--(node5)--(node2)--(node7)--(node4)--(node6)  ;
\draw (node4)--(node2)--(node3)--(node4);
\draw (node1)--(node2);
\draw (node1)--(node3);
\draw (node1)--(node4);

\draw (4,-1.5) node {Graph 22};
\end{tikzpicture}

&

\begin{tikzpicture}[scale=0.25]

\draw (0,0) node [scale=0.5, circle, draw](node6){6};
\draw (4,6.84) node [scale=0.5, circle, draw](node5){5};
\draw (8,0) node [scale=0.5, circle, draw](node7){7};
\draw (4,2.33) node [scale=0.5, circle, draw](node1){1};
\draw (2,3.42) node [scale=0.5, circle, draw](node3){3};
\draw (6, 3.42) node [scale=0.5, circle, draw](node2){2};
\draw (4,0)node [scale=0.5, circle, draw](node4){4};

\draw (node6)--(node3)--(node5)--(node2)--(node7)--(node4)--(node6)  ;
\draw (node4)--(node2)--(node3)--(node4);
\draw (node1)--(node2);
\draw (node1)--(node3);
\draw (node1)--(node4);

\draw (node1)--(node5);
\draw (node1)--(node6);
\draw (node1)--(node7);
\draw (4,-1.5) node {Graph 23};
\end{tikzpicture} 
&
\begin{tikzpicture}[scale=0.25]

\draw (0,0) node [scale=0.5, circle, draw](node6){6};
\draw (4,6.84) node [scale=0.5, circle, draw](node5){5};
\draw (8,0) node [scale=0.5, circle, draw](node7){7};
\draw (4,2.33) node [scale=0.5, circle, draw](node1){1};
\draw (2,3.42) node [scale=0.5, circle, draw](node3){3};
\draw (6, 3.42) node [scale=0.5, circle, draw](node2){2};
\draw (4,0)node [scale=0.5, circle, draw](node4){4};

\draw (node6)--(node3)--(node5)--(node2)--(node7)--(node4)--(node6)  ;
\draw (node4)--(node2)--(node3)--(node4);

\draw (node1)--(node3);

\draw (node1)--(node5);
\draw (node1)--(node6);
\draw (node1)--(node7);
\draw (4,-1.5) node {Graph 24};
\end{tikzpicture} 
&
\begin{tikzpicture}[scale=0.3]

\draw (0,0) node [scale=0.5, circle, draw](node6){6};
\draw (4,6.84) node [scale=0.5, circle, draw](node5){5};
\draw (8,0) node [scale=0.5, circle, draw](node7){7};
\draw (4,2.33) node [scale=0.5, circle, draw](node1){1};

\draw (2.8,2.84) node [scale=0.5, circle, draw](node3){3};
\draw (5.2, 2.84) node [scale=0.5, circle, draw](node2){2};
\draw (4,1)node [scale=0.5, circle, draw](node4){4};

\draw (node6)--(node5)--(node7)--(node6)  ;

\draw (node1)--(node2);
\draw (node1)--(node3);
\draw (node1)--(node4);

\draw (node1)--(node5);
\draw (node1)--(node6);
\draw (node1)--(node7);
\draw (node2)--(node7);
\draw (node3)--(node5);
\draw (node4)--(node6);
\draw (4,-1.5) node {Graph 25};
\end{tikzpicture} 

 \\
\end{tabular}
\caption{All non-word-representable graphs on 7 vertices}\label{25-non-word-repr}
\end{center}
\end{figure}
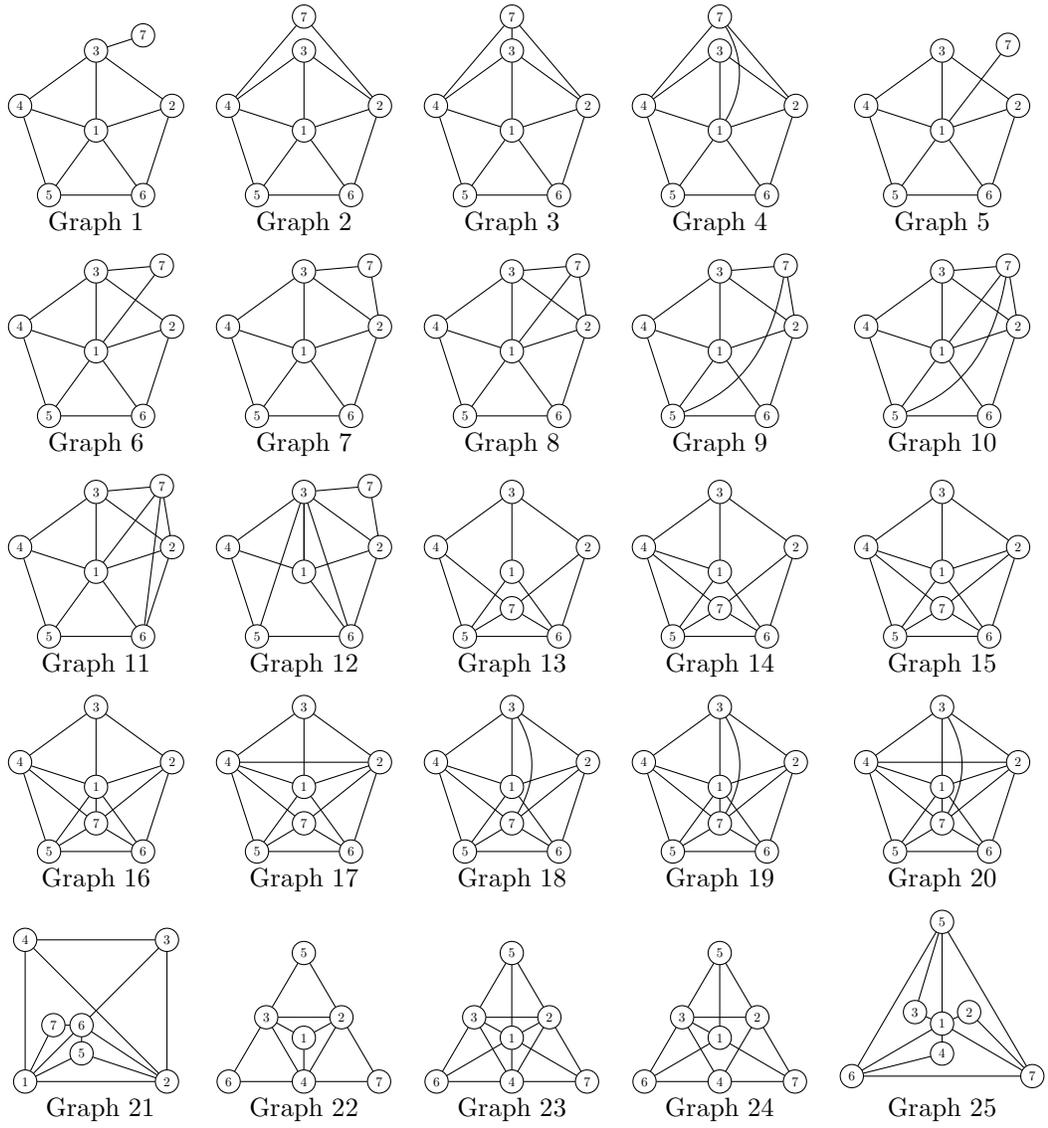

Note that Algorithm 1 is static while Algorithms 2 and 3 are dynamic meaning that they require resorting edges whenever an orientation is added to an edge. 

To make general statements on the efficiency of algorithms in the sense of the number of lines they produce, or about the time complexity, does not seem to be feasible. However, an indication of the efficiency of the algorithms can be obtained by looking at their performance on small non-word-representable graphs. For example, on the wheel graph $W_5$ (on 6 vertices), Algorithm 1 produces 10 lines of proof, while Algorithms 2 and 3 produce 7 lines of proof. As the next step, we test the algorithms on all 25 non-word-representable graphs in Figure~\ref{25-non-word-repr}, and the results of the test are presented in Table~\ref{ranking-table}. Recall that in our tests we do not use Theorem~\ref{source-thm}. It turns out that Algorithm 2 is (much) better/not worse in 24 out of 25 cases, and what is somewhat surprising, Algorithm 1 being clearly the worst one, has actually the best performance on Graph 11. On average, Algorithms 2 and 3 are essentially the same. In any case, Algorithm 2 is used in the software \cite{S}.

\begin{table}[htb]
\centering
\begin{tabular}{c|c|c|c|}
\hline
\rowcolor{gray!20!}
{\bf Graph} & {\bf Algorithm 2} &  {\bf Algorithm 3} & {\bf Algorithm 1}
 \\ 
\hline

1 &  7 lines  &   7 lines & 10 lines  \\ 
\hline

2 &  7 lines  &   7 lines & 13 lines  \\ 
\hline

3 &  10 lines  &   10 lines & 17 lines \\ 
\hline

4 &  7 lines  &   7 lines & 13 lines   \\ 
\hline

5 &  7 lines  &   7 lines & 10 lines  \\ 
\hline

6 &  7 lines  &   7 lines & 10 lines   \\ 
\hline

7 &  11 lines  &   11 lines & 11 lines   \\ 
\hline

8 &  16 lines  &   20 lines & 18 lines   \\ 
\hline

9 &  9 lines  &   11 lines & 15 lines   \\ 
\hline

10 &  9 lines &   11 lines & 15 lines   \\ 
\hline

11 &  21 lines &   21 lines & 15 lines  \\ 
\hline

12 &  8 lines  &   8 lines & 12 lines  \\ 
\hline

13 &  9 lines  &   9 lines & 17 lines   \\ 
\hline

14 &  9 lines  &   9 lines & 14 lines   \\ 
\hline

15 &  9 lines &   9 lines & 13 lines   \\ 
\hline

16 &  11 lines  &   12 lines & 14 lines    \\ 
\hline

17 &  9 lines  &   9 lines & 12 lines   \\ 
\hline

18 &  7 lines  &   7 lines & 13 lines   \\ 
\hline

19 &  7 lines  &   7 lines & 16 lines  \\ 
\hline

20 &  9 lines  &   11 lines & 11 lines   \\ 
\hline

21 &  10 lines  &   10 lines & 10 lines  \\ 
\hline

22 &  6 lines  &   6 lines & 19 lines  \\ 
\hline
 
23 &  9 lines  &   11 lines & 14 lines   \\ 
\hline

24 &  7 lines  &   7 lines & 15 lines   \\ 
\hline

25 &  9 lines  &   12 lines & 11 lines  \\ 
\hline

\rowcolor{gray!20!}
{\bf Average} &  9.2 lines  &   9.8 lines & 13.5 lines  \\ 
\hline

\end{tabular}

\caption{Ranking of the algorithms}\label{ranking-table}
\end{table}




\begin{center}
\begin{figure}
\begin{tabular}{c c c}

\begin{tikzpicture}[scale=0.3]

\draw (0,12) node [scale=0.5, circle, draw](node1){1};
\draw (12,12) node [scale=0.5, circle, draw](node2){2};
\draw (12,0) node [scale=0.5, circle, draw](node3){3};
\draw (0,0) node [scale=0.5, circle, draw](node4){4};

\draw (6,11) node [scale=0.5, circle, draw](node5){5};
\draw (11,6) node [scale=0.5, circle, draw](node6){6};
\draw (6,1)node [scale=0.5, circle, draw](node7){7};
\draw (1,6)node [scale=0.5, circle, draw](node8){8};

\draw (6,9.5)node [scale=0.5, circle, draw](node9){9};
\draw (9.5,6)node [scale=0.38, circle, draw](node10){\large 10};
\draw (6,2.5)node [scale=0.38, circle, draw](node11){\large 11};
\draw (2.5,6)node [scale=0.38, circle, draw](node12){\large 12};

\draw (4.5,7.5)node [scale=0.38, circle, draw](node13){\large 13};
\draw (7.5,7.5)node [scale=0.38, circle, draw](node14){\large 14};
\draw (7.5,4.5)node [scale=0.38, circle, draw](node15){\large 15};
\draw (4.5,4.5)node [scale=0.38, circle, draw](node16){\large 16};

\draw (node1)--(node2)--(node3)--(node4)--(node1);

\draw (node1)--(node5);
\draw (node1)--(node9);
\draw (node1)--(node8);
\draw (node1)--(node12);

\draw (node2)--(node5);
\draw (node2)--(node9);
\draw (node2)--(node6);
\draw (node2)--(node10);

\draw (node3)--(node6);
\draw (node3)--(node7);
\draw (node3)--(node10);
\draw (node3)--(node11);

\draw (node4)--(node8);
\draw (node4)--(node7);
\draw (node4)--(node12);
\draw (node4)--(node11);

\draw (node8)--(node9)--(node6)--(node11)--(node8);
\draw (node5)--(node10)--(node7)--(node12)--(node5);

\draw (node13)--(node14)--(node15)--(node16)--(node13);

\draw (node5)--(node13);
\draw (node5)--(node14);
\draw (node11)--(node13);
\draw (node11)--(node14);

\draw (node9)--(node15);
\draw (node9)--(node16);
\draw (node7)--(node15);
\draw (node7)--(node16);

\draw (node8)--(node13);
\draw (node8)--(node16);
\draw (node10)--(node13);
\draw (node10)--(node16);

\draw (node6)--(node14);
\draw (node6)--(node15);
\draw (node12)--(node14);
\draw (node12)--(node15);

\node[text width=3cm] at (6.5,-1) 
    {Shrikhande graph};

\end{tikzpicture}

& 

\begin{tikzpicture}[scale=0.3]

\draw (0,0) node [scale=0.5, circle, draw](node4){4};

\draw (11,6) node [scale=0.5, circle, draw](node6){6};

\draw (1,6)node [scale=0.5, circle, draw](node8){8};

\draw (6,2.5)node [scale=0.38, circle, draw](node11){\large 11};
\draw (2.5,6)node [scale=0.38, circle, draw](node12){\large 12};

\draw (4.5,7.5)node [scale=0.38, circle, draw](node13){\large 13};
\draw (7.5,7.5)node [scale=0.38, circle, draw](node14){\large 14};
\draw (7.5,4.5)node [scale=0.38, circle, draw](node15){\large 15};
\draw (4.5,4.5)node [scale=0.38, circle, draw](node16){\large 16};

\draw (node4)--(node8);

\draw (node4)--(node12);
\draw (node4)--(node11);

\draw (node6)--(node11)--(node8);

\draw (node13)--(node14)--(node15)--(node16)--(node13);

\draw (node11)--(node13);
\draw (node11)--(node14);

\draw (node8)--(node13);
\draw (node8)--(node16);

\draw (node6)--(node14);
\draw (node6)--(node15);
\draw (node12)--(node14);
\draw (node12)--(node15);

\node[text width=3cm] at (7.5,-1) 
    {Subgraph $S$};

\end{tikzpicture}

&

\begin{tikzpicture}[scale=0.3]

\draw (0,0) node [scale=0.45, circle, draw](node4){2};

\draw (11,6) node [scale=0.45, circle, draw](node6){6};

\draw (1,6)node [scale=0.45, circle, draw](node8){3};

\draw (6,2.5)node [scale=0.45, circle, draw](node11){1};
\draw (2.5,6)node [scale=0.45, circle, draw](node12){7};

\draw (4.5,7.5)node [scale=0.45, circle, draw](node13){4};
\draw (7.5,7.5)node [scale=0.45, circle, draw](node14){5};
\draw (7.5,4.5)node [scale=0.45, circle, draw](node15){9};
\draw (4.5,4.5)node [scale=0.45, circle, draw](node16){8};

\draw (node4)--(node8);

\draw (node4)--(node12);
\draw (node4)--(node11);

\draw (node6)--(node11)--(node8);

\draw (node13)--(node14)--(node15)--(node16)--(node13);

\draw (node11)--(node13);
\draw (node11)--(node14);

\draw (node8)--(node13);
\draw (node8)--(node16);

\draw (node6)--(node14);
\draw (node6)--(node15);
\draw (node12)--(node14);
\draw (node12)--(node15);

\node[text width=3cm] at (7.5,-1) 
    {Subgraph $S$};

\end{tikzpicture}

\end{tabular}
\caption{The Shrikhande graph and its minimal non-word-representable subgraph $S$ (with the original labelling and the labelling used in our proof)}\label{Shrikhande-pic}
\end{figure}
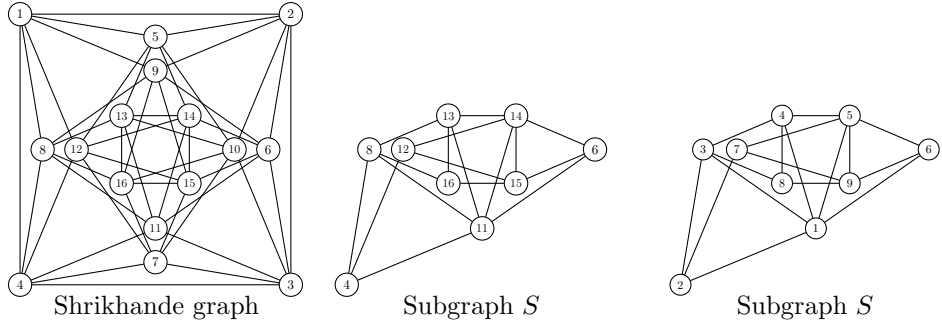
\end{center}

\section{The Shrikhande graph}\label{Shrikhande}
The Shrikhande graph is the graph on 16 vertices and 48 edges in Figure~\ref{Shrikhande-pic}. Among numerous properties of this graph \cite{W-Shrikhande}, it is known for being the smallest {\em distance-regular} graph that is not {\em distance-transitive} \cite[p.\ 136]{BCN89}. In Figure~\ref{Shrikhande-pic}, we also present Shrikhande graph's minimal non-word-representable subgraph $S$ labelled in the original way, and in the way used in the proof below (removing any vertex in $S$ results in a word-representable graph). We next give a proof for non-word-representability of $S$ that will give non-word-representability of the Shrikhande graph taking into account the hereditary nature of word-representability.  Referring to the rightmost graph in Figure~\ref{Shrikhande-pic}, our proof goes as follows. By Theorem~\ref{source-thm} we can assume that vertex 1 is a source and the rest is given by the following 6-lines proof: \\

\begin{small}

\noindent
{\bf 1.}  B5$\rightarrow$6 (Copy 2) O5$\rightarrow$4 (C1-6-5-4) O3$\rightarrow$4 (C1-5-4-3) O3$\rightarrow$2 (C1-4-3-2) B5$\rightarrow$7 (Copy 3) O2$\rightarrow$7 (C1-5-7-2) B3$\rightarrow$8 (Copy 4) O4$\rightarrow$8 (C1-4-8-3) O9$\rightarrow$8 O5$\rightarrow$9 (C4-8-9-5) O6$\rightarrow$9 (C1-6-9-5) O7$\rightarrow$9 (C5-7-9-6) S:3-2-7-9-8

\noindent
{\bf 2.}  MC4 8$\rightarrow$3 O8$\rightarrow$4 (C3-8-4) O9$\rightarrow$7 O8$\rightarrow$9 (C2-7-9-8-3) O5$\rightarrow$9 (C4-8-9-5) O6$\rightarrow$9 (C1-6-9-5) S:5-6-9-7

\noindent
{\bf 3.}  MC3 7$\rightarrow$5 O7$\rightarrow$2 (C1-5-7-2) O7$\rightarrow$9 O9$\rightarrow$6 (C5-7-9-6) O9$\rightarrow$5 (C1-6-9-5) O8$\rightarrow$4 O9$\rightarrow$8 (C4-8-9-5) O8$\rightarrow$3 (C1-4-8-3) S:7-9-8-3-2

\noindent
{\bf 4.}  MC2 6$\rightarrow$5 O4$\rightarrow$5 (C1-6-5-4) O4$\rightarrow$3 (C1-5-4-3) O2$\rightarrow$3 (C1-4-3-2) B5$\rightarrow$7 (Copy~5) O2$\rightarrow$7 (C1-5-7-2) O9$\rightarrow$7 O6$\rightarrow$9 (C5-7-9-6) O5$\rightarrow$9 (C1-6-9-5) O4$\rightarrow$8 O8$\rightarrow$9 (C4-8-9-5) O3$\rightarrow$8 (C1-4-8-3) S:2-3-8-9-7

\noindent
{\bf 5.}  MC5 7$\rightarrow$5 O7$\rightarrow$2 (C1-5-7-2) B3$\rightarrow$8 (Copy 6) O4$\rightarrow$8 (C3-8-4) O7$\rightarrow$9 O9$\rightarrow$8 (C2-7-9-8-3) O9$\rightarrow$5 (C4-8-9-5) O9$\rightarrow$6 (C1-6-9-5) S:7-9-6-5

\noindent
{\bf 6.}  MC6 8$\rightarrow$3 O8$\rightarrow$4 (C1-4-8-3) O8$\rightarrow$9 O9$\rightarrow$5 (C4-8-9-5) O9$\rightarrow$6 (C1-6-9-5) O9$\rightarrow$7 (C5-7-9-6) S:8-9-7-2-3

\end{small}

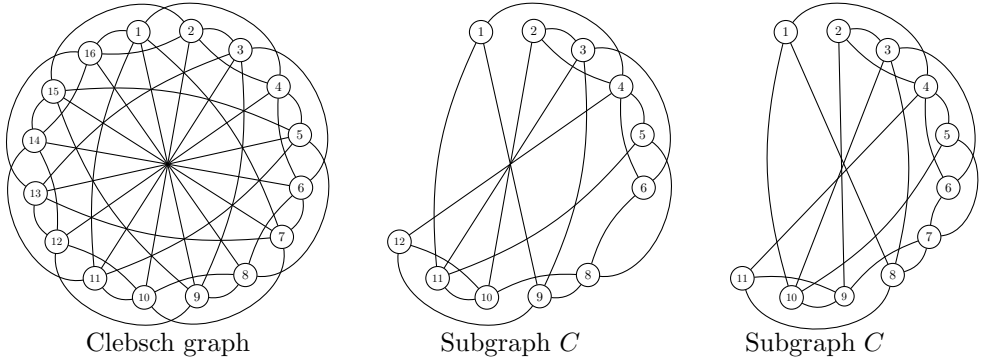
\begin{figure}
\begin{center}
\begin{tabular}{c c c}

\hspace{-1.5cm}
\begin{tikzpicture}[scale=0.3]

    \begin{scope}
    \foreach \a in {1,...,9}
    {
        \node[scale=0.5,circle,draw] (node\a) at ({\a*(-22.5)+125}:6){\a};
    }
    \foreach \a in {10,...,16}
    {
        \node[scale=0.42,circle,draw] (node\a) at ({\a*(-22.5)+125}:6){\a};
    }
        \node[scale=1] (cap) at (-90:8){Clebsch graph};

     \draw [bend left=75] (node1) to (node4);
     \draw [bend left=75] (node3) to (node6);
     \draw [bend left=75] (node5) to (node8);
     \draw [bend left=75] (node7) to (node10);
     \draw [bend left=75] (node9) to (node12);
	 \draw [bend left=75] (node11) to (node14);
	 \draw [bend left=75] (node13) to (node16);
 	 \draw [bend left=75] (node15) to (node2);

     \draw [bend left=30] (node2) to (node3);
     \draw [bend left=30] (node4) to (node5);
     \draw [bend left=30] (node6) to (node7);
 	\draw [bend left=30] (node8) to (node9);
     \draw [bend left=30] (node10) to (node11);
     \draw [bend left=30] (node12) to (node13);
     \draw [bend left=30] (node14) to (node15);
      \draw [bend left=30] (node16) to (node1);

 \draw [bend right=15] (node2) to (node4);
 \draw [bend right=15] (node4) to (node6);
 \draw [bend right=15] (node6) to (node8);
 \draw [bend right=15] (node8) to (node10);
 \draw [bend right=15] (node10) to (node12);
 \draw [bend right=15] (node12) to (node14);
 \draw [bend right=15] (node14) to (node16);
\draw [bend right=15] (node16) to (node2);

\draw (node2)--(node10);
\draw (node4)--(node12);
\draw (node6)--(node14);
\draw (node8)--(node16);
\draw (node1)--(node9);
\draw (node3)--(node11);
\draw (node5)--(node13);
\draw (node7)--(node15);

\draw [bend left=15] (node1) to (node7);
\draw [bend left=15] (node3) to (node9);
\draw [bend left=15] (node5) to (node11);
\draw [bend left=15] (node7) to (node13);
\draw [bend left=15] (node9) to (node15);
\draw [bend left=15] (node11) to (node1);
\draw [bend left=15] (node13) to (node3);
\draw [bend left=15] (node15) to (node5);

    \end{scope}
    
    \end{tikzpicture}
&

\begin{tikzpicture}[scale=0.3]
    \begin{scope}
    \foreach \a in {1,...,6}
    {
        \node[scale=0.5,circle,draw] (node\a) at ({\a*(-22.5)+125}:6){\a};
    }
 \foreach \a in {8,...,9}
    {
        \node[scale=0.5,circle,draw] (node\a) at ({\a*(-22.5)+125}:6){\a};
    }
    \foreach \a in {10,...,12}
    {
        \node[scale=0.42,circle,draw] (node\a) at ({\a*(-22.5)+125}:6){\a};
    }
        \node[scale=1] (cap) at (-90:8){Subgraph $C$};

     \draw [bend left=75] (node1) to (node4);
     \draw [bend left=75] (node3) to (node6);
     \draw [bend left=75] (node5) to (node8);

     \draw [bend left=75] (node9) to (node12);

     \draw [bend left=30] (node2) to (node3);
     \draw [bend left=30] (node4) to (node5);

 	\draw [bend left=30] (node8) to (node9);
     \draw [bend left=30] (node10) to (node11);

 \draw [bend right=15] (node2) to (node4);
 \draw [bend right=15] (node4) to (node6);
 \draw [bend right=15] (node6) to (node8);
 \draw [bend right=15] (node8) to (node10);
 \draw [bend right=15] (node10) to (node12);

\draw (node2)--(node10);
\draw (node4)--(node12);

\draw (node1)--(node9);
\draw (node3)--(node11);

\draw [bend left=15] (node3) to (node9);
\draw [bend left=15] (node5) to (node11);

\draw [bend left=15] (node11) to (node1);

    \end{scope}
    
    \end{tikzpicture}

&

\begin{tikzpicture}[scale=0.3]
    \begin{scope}
    \foreach \a in {1,...,6}
    {
        \node[scale=0.5,circle,draw] (node\a) at ({\a*(-22.5)+125}:6){\a};
    }
 \foreach \a in {7,...,8}
    {
        \node[scale=0.5,circle,draw] (node\a) at ({\a*(-22.5)+125}:6){\a};
    }
    \foreach \a in {9,...,11}
    {
        \node[scale=0.42,circle,draw] (node\a) at ({\a*(-22.5)+125}:6){\a};
    }
        \node[scale=1] (cap) at (-90:8){Subgraph $C$};

     \draw [bend left=75] (node1) to (node4);
     \draw [bend left=75] (node3) to (node6);
     \draw [bend left=75] (node5) to (node7);

     \draw [bend left=75] (node8) to (node11);

     \draw [bend left=30] (node2) to (node3);
     \draw [bend left=30] (node4) to (node5);

 	\draw [bend left=30] (node7) to (node8);
     \draw [bend left=30] (node9) to (node10);

 \draw [bend right=15] (node2) to (node4);
 \draw [bend right=15] (node4) to (node6);
 \draw [bend right=15] (node6) to (node7);
 \draw [bend right=15] (node7) to (node9);
 \draw [bend right=15] (node9) to (node11);

\draw (node2)--(node9);
\draw (node4)--(node11);

\draw (node1)--(node8);
\draw (node3)--(node10);

\draw [bend left=15] (node3) to (node8);
\draw [bend left=15] (node5) to (node10);

\draw [bend left=15] (node10) to (node1);

    \end{scope}
    \end{tikzpicture}

\end{tabular}
\caption{The Clebsch graph and its minimal non-word-representable subgraph~$C$ (with the original labelling and labelling used in the proof)}\label{Clebsch-pic}
\end{center}
\end{figure}

\section{The Clebsch graph}\label{Clebsch}

The Clebsch graph, also known as the {\em Greenwood-Gleason graph} \cite[p. 284]{RW98} and shown in Figure~\ref{Clebsch-pic}, is a strongly regular quintic graph on 16 vertices and 40 edges that enjoys many interesting properties \cite{W-Clebsch}. Figure~\ref{Clebsch-pic} also gives the subgraph $C$ of the Clebsch graph that is confirmed by software \cite{G,S} to be minimal non-word-representable. We next give a 10-line proof of non-word-representability of $C$ that confirms non-word-representability of the Clebsch graph. Referring to the rightmost graph in Figure~\ref{Clebsch-pic}, our proof goes as follows. By Theorem~\ref{source-thm} we can assume that vertex 4 is a source and the rest is given by the following proof: \\

\begin{small}

\noindent
{\bf 1.}  B6$\rightarrow$7 (Copy 2) O5$\rightarrow$7 (C4-6-7-5) B3$\rightarrow$6 (Copy 3) O3$\rightarrow$2 (C2-4-6-3) O3$\rightarrow$8 O8$\rightarrow$7 (C3-8-7-6) B8$\rightarrow$11 (Copy 4) O8$\rightarrow$1 (C1-8-11-4) O10$\rightarrow$1 O3$\rightarrow$10 (C1-10-3-8) O10$\rightarrow$5 (C1-10-5-4) O10$\rightarrow$9 O9$\rightarrow$7 (C5-10-9-7) O2$\rightarrow$9 (C2-9-10-3) O11$\rightarrow$9 (C2-9-11-4) S:8-11-9-7

\noindent
{\bf 2.}  MC4 11$\rightarrow$8 O1$\rightarrow$8 (C1-8-11-4) O9$\rightarrow$7 O11$\rightarrow$9 (C7-9-11-8) O2$\rightarrow$9 (C2-9-11-4) O10$\rightarrow$9 O3$\rightarrow$10 (C2-9-10-3) O1$\rightarrow$10 (C1-10-3-8) O5$\rightarrow$10 (C1-10-5-4) S:5-10-9-7

\noindent
{\bf 3.}  MC3 6$\rightarrow$3 O2$\rightarrow$3 (C2-4-6-3) B2$\rightarrow$9 (Copy 5) O11$\rightarrow$9 (C2-9-11-4) B7$\rightarrow$9 (Copy 6) O5$\rightarrow$10 O10$\rightarrow$9 (C5-10-9-7) O1$\rightarrow$10 (C1-10-5-4) O10$\rightarrow$3 (C2-9-10-3) O8$\rightarrow$3 O1$\rightarrow$8 (C1-10-3-8) O11$\rightarrow$8 (C1-8-11-4) O8$\rightarrow$7 (C3-8-7-6) S:11-8-7-9

\noindent
{\bf 4.}  MC6 9$\rightarrow$7 O11$\rightarrow$8 O8$\rightarrow$7 (C7-9-11-8) O1$\rightarrow$8 (C1-8-11-4) O8$\rightarrow$3 (C3-8-7-6) O1$\rightarrow$10 O10$\rightarrow$3 (C1-10-3-8) O5$\rightarrow$10 (C1-10-5-4) O10$\rightarrow$9 (C2-9-10-3) S:5-10-9-7

\noindent
{\bf 5.}  MC5 9$\rightarrow$2 O9$\rightarrow$10 O10$\rightarrow$3 (C2-9-10-3) O9$\rightarrow$11 (C2-9-11-4) O5$\rightarrow$10 O9$\rightarrow$7 (C5-10-9-7) O1$\rightarrow$10 (C1-10-5-4) O8$\rightarrow$3 O1$\rightarrow$8 (C1-10-3-8) O11$\rightarrow$8 (C1-8-11-4) O8$\rightarrow$7 (C3-8-7-6) S:9-11-8-7

\noindent
{\bf 6.}  MC2 7$\rightarrow$6 O7$\rightarrow$5 (C4-6-7-5) B3$\rightarrow$6 (Copy 7) O3$\rightarrow$2 (C2-4-6-3) B2$\rightarrow$9 (Copy 8) O10$\rightarrow$9 O3$\rightarrow$10 (C2-9-10-3) O11$\rightarrow$9 (C2-9-11-4) O10$\rightarrow$5 O7$\rightarrow$9 (C5-10-9-7) O10$\rightarrow$1 (C1-10-5-4) O3$\rightarrow$8 O8$\rightarrow$1 (C1-10-3-8) O8$\rightarrow$11 (C1-8-11-4) O7$\rightarrow$8 (C3-8-7-6) S:7-8-11-9

\noindent
{\bf 7.}  MC8 9$\rightarrow$2 O9$\rightarrow$11 (C2-9-11-4) B7$\rightarrow$9 (Copy 9) O8$\rightarrow$11 O7$\rightarrow$8 (C7-9-11-8) O8$\rightarrow$1 (C1-8-11-4) O3$\rightarrow$8 (C3-8-7-6) O10$\rightarrow$1 O3$\rightarrow$10 (C1-10-3-8) O10$\rightarrow$5 (C1-10-5-4) O9$\rightarrow$10 (C2-9-10-3) S:7-9-10-5

\noindent
{\bf 8.}  MC9 9$\rightarrow$7 O10$\rightarrow$5 O9$\rightarrow$10 (C5-10-9-7) O10$\rightarrow$1 (C1-10-5-4) O3$\rightarrow$10 (C2-9-10-3) O3$\rightarrow$8 O8$\rightarrow$1 (C1-10-3-8) O8$\rightarrow$11 (C1-8-11-4) O7$\rightarrow$8 (C3-8-7-6) S:9-7-8-11

\noindent
{\bf 9.}  MC7 6$\rightarrow$3 O2$\rightarrow$3 (C2-4-6-3) O8$\rightarrow$3 O7$\rightarrow$8 (C3-8-7-6) B8$\rightarrow$11 (Copy 10) O8$\rightarrow$1 (C1-8-11-4) O7$\rightarrow$9 O9$\rightarrow$11 (C7-9-11-8) O9$\rightarrow$2 (C2-9-11-4) O9$\rightarrow$10 O10$\rightarrow$3 (C2-9-10-3) O10$\rightarrow$1 (C1-10-3-8) O10$\rightarrow$5 (C1-10-5-4) S:7-9-10-5

\noindent
{\bf 10.}  MC10 11$\rightarrow$8 O1$\rightarrow$8 (C1-8-11-4) O1$\rightarrow$10 O10$\rightarrow$3 (C1-10-3-8) O5$\rightarrow$10 (C1-10-5-4) O9$\rightarrow$10 O7$\rightarrow$9 (C5-10-9-7) O9$\rightarrow$2 (C2-9-10-3) O9$\rightarrow$11 (C2-9-11-4) S:7-9-11-8

\end{small}

\section{Concluding remarks}\label{concluding}

In this paper, we introduce methods to generate automatically proofs of non-word-representability of a graph that can be verified, in a robust way, by a human. We do believe that our work and software \cite{S} will have a dramatic impact to the further development of the theory of word-representable graphs. Indeed, now we can argue non-word-representability for many more (larger) graphs without referring to software, which is a very welcoming news. 

As for open problems, we see improving Algorithms 2 and 3 by modifying our approach of selecting edges to branch: for example, we can look for branching edges that increase the number/length of directed paths in the graph, which should increase usability of Lemma~\ref{lemma}. 

Finally, understanding how to estimate, say, the average efficiency of our algorithms, or relevant algorithms yet to be introduced, in terms of certain parameters (number of cycles or alike) is a good theoretical question that seems to be very challenging. The time complexity of our algorithms is also a very interesting and challenging direction of research that was completely ignored by us because our focus was in producing short proofs. 

\section*{Acknowledgments}

The first author is grateful to the London Mathematical Society for supporting work on this project under ``Computer Science Small Grants -- Scheme 7''.


\begin{thebibliography}{20}

\bibitem{BCN89} A. E. Brouwer, A. M. Cohen, and A. Neumaier. Distance-Regular Graphs. New York: Springer-Verlag, 1989.

\bibitem{G} M. Glen. Software available at  \verb>personal.strath.ac.uk/> \verb>sergey.kitaev/word-representable-graphs.html>

\bibitem{G76} M.\ C.\ Golumbic. The complexity of Comparability Graph Recognition and Coloring, {\em Computing} {\bf 18} (1977) 199--208.

\bibitem{HKP16} M. M. Halld\'{o}rsson, S. Kitaev, A. Pyatkin. Semi-transitive orientations and word-representable graphs, {\em Discr. Appl. Math.} {\bf 201} (2016) 164--171.

\bibitem{K17} S. Kitaev. A Comprehensive Introduction to the Theory of Word-Representable Graphs. {\em Lect. Notes in Comp. Sci.} {\bf 10396} (2017) 36--67. 

\bibitem{KL15} S. Kitaev and V. Lozin. Words and Graphs, {\em Springer}, 2015.

\bibitem{KP08} S. Kitaev, A. Pyatkin. On representable graphs. {\em
J. Autom., Lang. and Combin.} {\bf 13} (2008) 1, 45--54.

\bibitem{KP} S. Kitaev and A. Pyatkin. On semi-transitive orientability of triangle-free graphs. {\em Discussiones Mathematicae Graph Theory} {\bf 43} (2023), ID: 4621, page 533.

\bibitem{RW98} R. C. Read and R. J. Wilson. {\em An Atlas of Graphs.} Oxford, England: Oxford University Press, 1998. 

\bibitem{S} H. Sun. Software available at  \\ \verb>personal.strath.ac.uk/sergey.kitaev/human-verifiable-proofs.html>

\bibitem{W-Clebsch} E. W. Weisstein. ``Clebsch Graph.'' From MathWorld--A Wolfram Web Resource. \verb>https://mathworld.wolfram.com/ClebschGraph.html>

\bibitem{W-Shrikhande} E. W. Weisstein. ``Shrikhande Graph.'' From MathWorld--A Wolfram Web Resource. \verb>https://mathworld.wolfram.com/ShrikhandeGraph.html>

\bibitem{Z} H. Zantema. Software REPRNR to compute the representation number of a graph, 2018. Available at \verb>https://www.win.tue.nl/~hzantema/reprnr.html.>

\end{thebibliography}
\end{document}